 \definecolor{cupgreen}{rgb}{0,0.498,0.208}
  \definecolor{cupblue}{rgb}{0,0,.5}
  \definecolor{cupred}{rgb}{1,0.04,0}
  \definecolor{cuppink}{rgb}{0.925,0,0.545}
  \definecolor{cupmagenta}{rgb}{0.624,0.161,0.424}
  \definecolor{cupbrown}{rgb}{0.71,0.212,0.133}
  \definecolor{cupgreen}{rgb}{0,0,0}
  \definecolor{cupblue}{rgb}{0,0,0}
  \definecolor{cupred}{rgb}{0,0,0}
  \definecolor{cuppink}{rgb}{0,0,0}
  \definecolor{cupmagenta}{rgb}{0,0,0}
  \definecolor{cupbrown}{rgb}{0,0,0}
\definecolor{TITLE}{rgb}{0,0,0}
\definecolor{AUTHOR1}{rgb}{0.00,0.59,0.00}
\definecolor{AUTHOR2}{rgb}{0.50,0.00,1.00}
\definecolor{SECTION}{rgb}{0.50,0.00,1.00}
\definecolor{FOOTTITLE}{rgb}{0.00,0.50,0.75}
\definecolor{THM}{rgb}{0.8,0,0.1}
\definecolor{SEC}{rgb}{0,0,1}
\newtheorem{theorem}{{\color{THM} Theorem}}[section]
\newtheorem{lemma}[theorem]{{\color{THM}Lemma}}
\theoremstyle{definition}
\newtheorem{Conjecture}[theorem]{Conjecture}
\newtheorem{pro}[theorem]{{\color{THM}Problem}}
\numberwithin{equation}{section}
\begin{document}
\title{Some results on average of Fibonacci and Lucas sequences}
\author{Daniel Yaqubi}
\email{daniel\_yaqubi@yahoo.es}
\address{Department of Pure Mathematics, Ferdowsi University of Mashhad, Iran.}
\author{ Amirali Fatehizadeh}
\email{ amirali.fatehizadeh@gmail.com}
\address{Department of Mathematics, Quchan University of Technology, Iran.}
\subjclass[2000]{11B39, 11B50, 11A07.}
\keywords{Fibonacci number, Lucas number, Wall-Sun-Sun prime conjecture, Rank of apparition, Pisano period.}
\begin{abstract}
The numerical sequence in which the $n$-th term is the average (that is, arithmetic mean) of the of the first $n$ Fibonacci numbers may be found in the OEIS (see  \href{https://oeis.org/A111035 }{A111035 }). An interesting question one might pose is
which terms of the sequences
\begin{eqnarray}
\bigg(\frac{1}{n}\sum_{i=1}^n F_i\bigg)_{n=1}^{\infty} \text{and}~~\quad \bigg(\frac{1}{n}\sum_{i=1}^n L_i\bigg)_{n=1}^{\infty}
\end{eqnarray}
are integers?\\
 The average of the first three Fibonacci sequence, $(1+1+3)/3 = 4/3$, is not whereas the average of the first $24$  Fibonacci sequence, $\frac{\sum_{i=1}^{24}F_i}{24}=5058$ is. In this paper, we address this question and also present some properties of average Fibonacci and Lucas numbers using the \textit{Wall-Sun-Sun prime conjecture}.
\end{abstract}

\maketitle

\section{Introduction}
Fibonacci numbers originally arose in a problem in \textit{Liber Abaci}, published in 1202, which was one of the first texts to describe the Hindu-Arabic numeral system. Since then Fibonacci numbers have become one of the most popular sequences to study, appearing in a wealth of problems not only in enumerative combinatorics. \\
The Fibonacci numbers, denoted by $\{F_n\}_{n=0}^{\infty}$ , are defined by the following recurrence relation
\begin{equation}
F_{n+1}=F_n+F_{n-1},
\end{equation}
with initial values $F_0=0$ and $F_1=1$ . The first elements of this sequence are given in (\href{https://oeis.org/A000045 }{A000045 }), as
\[0\quad 1\quad1\quad 2\quad 3\quad 5\quad 8\quad 13\quad 21\quad \ldots .\]
The Fibonacci numbers are closely related to binomial coefficients; it is a well-known fact that they are given by the sums of the rising diagonal lines of Pascal's triangle (see \cite{Kos})
\[F_{n+1}=\sum_{i=0}^{\lfloor\frac{n}{2}\rfloor} {n-i \choose i}.\]
The Lucas numbers $\{L_n\}_{n=0}^{\infty}$, are defined by the same recurrence relation as the Fibonacci numbers with different initial values (see \href{https://oeis.org/A000032}{A000032 }). In a similar, way the Lucas sequence satisfies the recurrence relation
\begin{equation}
L_{n+1}=L_n+L_{n-1},\quad L_0=2, \quad L_1=1.
\end{equation}
Fibonacci and Lucas numbers have been extensively studied. A closed form of the \emph{Binet's formula}, for instance, can be expressed in terms of the characteristic roots of the
Fibonacci sequence.
Let $\alpha$ and $\beta$ denote the roots of the polynomial $x^2-x-1=0$, i.e., $\alpha=\frac{1+\sqrt{5}}{2}$ and $\beta=\frac{1-\sqrt{5}}{2}$. 
\begin{align*}
F_n=\frac{1}{\sqrt{5}}({\alpha}^n-{\beta}^n)\quad \mbox{and}\quad
L_n={\alpha}^n+{\beta}^n.
\end{align*}
These formulas can be extended to negative integers $n$ in a natural way with $F_{-n} =(-1)^{n-1}F_n$
and $L_{-n}=(-1)^nL_n$ for all $n$.

In this paper, we pose the following question with a slightly
different formulation.
\begin{pro}
which terms of the sequences
\begin{eqnarray}
\bigg(\frac{1}{n}\sum_{i=1}^n F_i\bigg)_{n=1}^{\infty} \text{and}~~\quad \bigg(\frac{1}{n}\sum_{i=1}^n L_i\bigg)_{n=1}^{\infty}
\end{eqnarray}
are integers?
\end{pro}
 The average of the Fibonacci and Lucas sequences is not always an integer, for instance, Fibonacci sequence for  $n=3$ have $\frac{1+1+2}{3}=\frac{4}{3}$. However, for $n=24$ for instance, we obtain an integer as the average. We will be interested in examining for which $n$ is this average is an integer.

The paper is organized as follows. First, in Section 2, we recall some well-known facts about the Fibonacci and Lucas numbers that will be used later. Section 3 includes the main theorem and finally, in Section 4, we gave further properties on the average of Fibonacci and Lucas numbers using the \textit{Wall-Sun-Sun prime conjecture}.
\section{Preliminaries}
First, we give some identities and theorems about Fibonacci and Lucas numbers which we will make use of later\cite{Kos}.\\
\begin{equation}\label{Ide1}
2|F_n\Leftrightarrow 2|L_n\Leftrightarrow 3|n;
\end{equation}
\begin{equation}\label{Ide2}
F_1+F_2+F_3+\cdots F_n=F_{n+2}-1; 
\end{equation}
\begin{equation}\label{Luc1}
L_1+L_2+L_3+\cdots L_n=L_{n+2}-3; 
\end{equation}
\begin{equation}\label{Ide3}
F_{2k}=F_{k+1}^2-F_{k-1}^2;
\end{equation}
\begin{equation}\label{Ide4}
24|F_{12k};
\end{equation}
\begin{equation}\label{Ide5}
\text{for}~n\geq3~~F_n|F_m\Leftrightarrow n|m.
\end{equation}

We also recall the following known formulas for Fibonacci and Lucas sequences \cite{Bri}.
\begin{theorem}\label{Rec}
For any positive integer $k$, we have
\begin{eqnarray*}
F_{4k+2}-1&=&F_{2k}L_{2k+2}\\
F_{4k+3}-1&=&F_{2k+2}L_{2k+1}\\
F_{4k+4}-1&=&F_{2k+3}L_{2k+1}\\
F_{4k+5}-1&=&F_{2k+2}L_{2k+3}.
\end{eqnarray*}
\end{theorem}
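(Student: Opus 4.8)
The plan is to deduce all four identities from a single product-to-sum formula for Fibonacci and Lucas numbers. Starting from the Binet formulas and the relation $\alpha\beta=-1$, expanding $(\alpha^m-\beta^m)(\alpha^n+\beta^n)$ and collecting terms yields
\[
F_mL_n=F_{m+n}+(-1)^nF_{m-n},
\]
valid for all integers $m,n$ once $F$ is extended to negative indices by $F_{-j}=(-1)^{j-1}F_j$ as in the introduction. This identity is the only substantial ingredient; each line of the theorem then follows by a single substitution.

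Concretely, for the first identity take $(m,n)=(2k,2k+2)$: then $(-1)^n=1$ and $F_{m-n}=F_{-2}=-F_2=-1$, so $F_{2k}L_{2k+2}=F_{4k+2}-1$. The other three are identical in spirit: $(m,n)=(2k+2,2k+1)$ gives $F_{4k+3}+(-1)^{2k+1}F_1=F_{4k+3}-1$; $(m,n)=(2k+3,2k+1)$ gives $F_{4k+4}+(-1)^{2k+1}F_2=F_{4k+4}-1$; and $(m,n)=(2k+2,2k+3)$ gives $F_{4k+5}+(-1)^{2k+3}F_{-1}=F_{4k+5}-1$, using $F_{-1}=1$. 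In each case $|m-n|\le 2$, so the correction term $(-1)^nF_{m-n}$ reduces to $\pm1$, and the parity of $n$ is exactly what is needed to make it $-1$.

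The only point requiring care — bookkeeping rather than a genuine obstacle — is checking in each case that the sign $(-1)^n$ and the sign hidden in the negative-index value $F_{m-n}$ combine to $-1$ and not $+1$. If one prefers to avoid negative indices altogether, one can instead establish the product-to-sum identity only for $0\le n\le m$ and reorganize the four cases so that the larger index always sits on the Fibonacci factor (or simply expand each case directly from Binet); both routes are completely routine. A proof by induction on $k$ is also possible, but it would require advancing all four identities simultaneously using the recurrences $F_{j+1}=F_j+F_{j-1}$ and $L_{j+1}=L_j+L_{j-1}$, which is strictly more cumbersome, so I would not pursue it. Finally, note that in view of \eqref{Ide2} these identities are exactly factorizations of the partial sum $\sum_{i=1}^{n}F_i=F_{n+2}-1$, which is presumably why they are recorded here.
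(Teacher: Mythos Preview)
Your proof is correct: the product-to-sum identity $F_mL_n=F_{m+n}+(-1)^nF_{m-n}$ follows immediately from Binet, and each of the four specializations you list checks out, including the negative-index bookkeeping.

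As for comparison with the paper, there is nothing to compare: the paper does not prove Theorem~\ref{Rec} at all. It is stated as a recalled fact with a citation to \cite{Bri}, and no argument is given. So your proposal actually supplies a proof where the paper supplies only a reference. Your approach is the natural one and is essentially the standard derivation of these factorizations; the remark at the end tying them to \eqref{Ide2} is also apt, since that is exactly how the paper uses them later.
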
 

\begin{lemma}\label{Lem1}
Let $n$ be a positive integer. Then
\[L_n\equiv L_{n+6}\pmod{4}.\]
\end{lemma}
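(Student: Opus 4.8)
The plan is to verify the congruence $L_n \equiv L_{n+6} \pmod 4$ directly from the recurrence by examining the Lucas sequence modulo $4$. First I would compute the sequence $L_n \bmod 4$ for the first several indices: starting from $L_0 = 2$, $L_1 = 1$, the recurrence $L_{n+1} = L_n + L_{n-1}$ gives, modulo $4$, the values $2, 1, 3, 0, 3, 3, 2, 1, \ldots$. Since each term depends only on the previous two, and the pair $(L_6, L_7) \equiv (2,1) \equiv (L_0, L_1) \pmod 4$, the sequence $(L_n \bmod 4)$ is periodic with period dividing $6$; the list above shows the minimal period is exactly $6$. Hence $L_n \equiv L_{n+6} \pmod 4$ for all $n \geq 0$, and this is precisely the claim.

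Alternatively, and perhaps more in keeping with the identity-based style of this section, I would derive it from a closed identity. Using the standard addition formula $L_{m+n} = L_m L_n - (-1)^n L_{m-n}$ (or $L_{n+6} = L_n L_6 - (-1)^6 L_{n-6}$ together with a telescoping argument), one reduces to showing $L_{n+6} - L_n$ is divisible by $4$. Concretely, $L_{n+6} - L_n = L_n(L_6 - 1) - (L_{n-6} - L_n)$ is not quite clean; cleaner is to iterate the basic recurrence six times to express $L_{n+6}$ as an integer combination $L_{n+6} = 13 L_n + 8 L_{n-1}$ (the coefficients being $F_7 = 13$ and $F_6 = 8$, via $L_{n+6} = F_6 L_{n+1} + F_5 L_n$ style identities — here one should double-check the exact Fibonacci coefficients). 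Then modulo $4$ we get $L_{n+6} \equiv 13 L_n \equiv L_n \pmod 4$ since $8 \equiv 0$ and $13 \equiv 1$, completing the proof.

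There is essentially no obstacle here: the result is a finite check. The only point requiring a little care is pinning down the correct coefficients in the six-step unrolling of the recurrence (equivalently, confirming that $L_{n+6} = 13L_n + 8L_{n-1}$, which follows from $L_{n+k} = F_{k-1}L_n + F_k L_{n+1}$ after re-indexing, or can simply be read off the period-$6$ table above). I would present the periodicity argument as the main proof since it is self-contained and transparent, and perhaps remark on the coefficient identity as an aside. Both approaches immediately extend to tell us the full Pisano-type period of $(L_n)$ modulo $4$, which may be useful for the later sections invoking Pisano periods.
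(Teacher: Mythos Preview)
Your primary argument is essentially the paper's own proof: compute $L_n \bmod 4$ for the first few indices, observe that two consecutive residues repeat after six steps, and conclude by induction via the recurrence. Your alternative route through the unrolled recurrence $L_{n+6} = 13L_n + 8L_{n-1}$ (so $L_{n+6} \equiv L_n \pmod 4$ immediately) is correct and is not in the paper; it has the minor advantage of avoiding any case-checking, at the cost of needing the Fibonacci-coefficient identity $L_{n+k} = F_{k+1}L_n + F_k L_{n-1}$, which you correctly flag as the only point needing verification.
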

\begin{proof}
We know that the Lucas sequence satisfies the recurrence relation  
\begin{equation}\label{equ2}
L_{n}=L_{n-1}+L_{n-2},
\end{equation}
with the initial values $L_0=2$ and $L_1=1$. Consider the first values of this sequence. It is easy to see $L_1\equiv L_{7}\pmod{4},\quad L_2\equiv L_{8}\pmod{4},\quad L_3\equiv L_{9}\pmod{4}$, and so on. The claim follows by induction on
$n$ using the formula \eqref{equ2}.
\end{proof}
\begin{lemma}\label{Lem2}
Let $k$ be a be a positive integer. The sum of three consecutive Lucas numbers with odd index is divisible by $4$, i.e.,
\[L_{2k+1}+L_{2k+3}+L_{2k+5}\equiv 0 \pmod{4}.\]
\end{lemma}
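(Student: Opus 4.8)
The plan is to prove the stronger, index-free identity
\[
L_n + L_{n+2} + L_{n+4} = 4L_{n+2} \qquad \text{for all } n \ge 0,
\]
from which the lemma follows immediately by taking $n = 2k+1$: the three indices $2k+1,\,2k+3,\,2k+5$ are then exactly $n,\,n+2,\,n+4$, and the right-hand side is visibly a multiple of $4$, so $L_{2k+1}+L_{2k+3}+L_{2k+5}\equiv 0\pmod 4$.

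To establish the identity I would reduce everything to $L_n$ and $L_{n+1}$ using the Lucas recurrence \eqref{equ2}: one has $L_{n+2}=L_{n+1}+L_n$, then $L_{n+3}=L_{n+2}+L_{n+1}=2L_{n+1}+L_n$, and $L_{n+4}=L_{n+3}+L_{n+2}=3L_{n+1}+2L_n$. Adding these,
\[
L_n + L_{n+2} + L_{n+4} = L_n + (L_{n+1}+L_n) + (3L_{n+1}+2L_n) = 4L_n + 4L_{n+1} = 4(L_n+L_{n+1}) = 4L_{n+2},
\]
which is the claim. Specializing $n=2k+1$ finishes the lemma.

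An alternative, entirely modulo $4$, uses Lemma~\ref{Lem1} directly: since $L_m\equiv L_{m+6}\pmod 4$ for every $m$, replacing $k$ by $k+3$ shifts each of the three indices by $6$, so the residue of $S(k):=L_{2k+1}+L_{2k+3}+L_{2k+5}$ modulo $4$ depends only on $k\bmod 3$. It then suffices to verify the three base cases: $L_3+L_5+L_7=4+11+29=44$, $L_5+L_7+L_9=11+29+76=116$, and $L_7+L_9+L_{11}=29+76+199=304$, each divisible by $4$; the general case follows by Lemma~\ref{Lem1} and induction on $k$ in steps of $3$.

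There is no genuine obstacle here; the only thing to get right is bookkeeping — the integer coefficients in the expansion of $L_{n+4}$ in the first approach, or the three base cases and the period-$6$ reduction in the second. I would present the first approach, since the identity $L_n+L_{n+2}+L_{n+4}=4L_{n+2}$ is cleaner, proves slightly more than asked, and fits naturally alongside the telescoping identities \eqref{Ide2} and \eqref{Luc1} recalled above.
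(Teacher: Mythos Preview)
Your proof is correct, and your primary approach is genuinely different from the paper's. The paper simply says to check the first values of the odd-indexed Lucas sequence and proceed by induction on $k$ as in Lemma~\ref{Lem1}; this is essentially your alternative approach (periodicity modulo $4$ plus a finite check), just stated far more tersely. Your main argument instead establishes the exact identity $L_n+L_{n+2}+L_{n+4}=4L_{n+2}$ via the recurrence, which is cleaner, holds for all indices (not just odd ones), and makes the divisibility by $4$ transparent without any case analysis or appeal to Lemma~\ref{Lem1}. The paper's route has the modest advantage of reusing the machinery already set up in Lemma~\ref{Lem1}, but your identity-based proof is shorter, self-contained, and strictly stronger.
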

\begin{proof}
Consider the first values of the Lucas sequence that have odd index. The
proof follows by induction on $k$ as in the proof of Lemma \eqref{Lem1}.
\end{proof}

\begin{lemma}\label{Lem3}
Let $k$ be a positive integer. Then
\[L_{2k+2}\not\equiv 0 \pmod{4}.\]
\end{lemma}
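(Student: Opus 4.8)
The plan is to exploit the period-$6$ behaviour of the Lucas sequence modulo $4$ recorded in Lemma~\ref{Lem1}. Since $L_n\equiv L_{n+6}\pmod 4$ for every positive integer $n$, the residue of $L_{2k+2}$ modulo $4$ is determined by the residue of the index $2k+2$ modulo $6$. As $2k+2$ is even, it is congruent to $0$, $2$ or $4$ modulo $6$, and because $k\ge 1$ the index $2k+2$ is at least $4$; hence it suffices to evaluate $L_m \bmod 4$ for one representative $m$ in each of these three classes, for instance $m=6$, $m=8$ and $m=4$.

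The remaining step is a direct computation: from $L_4=7$, $L_6=18$ and $L_8=47$ we get $L_4\equiv 3$, $L_6\equiv 2$ and $L_8\equiv 3\pmod 4$. In every case the residue lies in $\{2,3\}$, so none of these Lucas numbers is divisible by $4$, and by the reduction above the same holds for $L_{2k+2}$ for all $k\ge 1$. (One can cross-check against identity~\eqref{Ide1}: $L_{2k+2}$ is even only when $3\mid 2k+2$, i.e.\ when $6\mid 2k+2$, so in the two remaining classes $L_{2k+2}$ is odd and the claim is immediate; only the representative $L_6\equiv 2\pmod4$ genuinely needs the mod-$4$ information, and the uniform argument above handles all three cases at once.)

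I do not expect any genuine obstacle here: the content is entirely the periodicity supplied by Lemma~\ref{Lem1}, and the only points requiring care are (i) checking that the three representatives $m=4,6,8$ really exhaust the even residues of $2k+2$ modulo $6$ attained for $k\ge 1$, and (ii) invoking Lemma~\ref{Lem1} rather than re-deriving the period. If one prefers, the same proof can be phrased as an induction on $k$ in steps of $3$, with base cases $k=1,2,3$ and inductive step $L_{2(k+3)+2}=L_{(2k+2)+6}\equiv L_{2k+2}\pmod 4$; this is merely a restatement of the argument above.
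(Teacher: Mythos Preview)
Your argument is correct: invoking Lemma~\ref{Lem1} reduces the claim to the three residues $L_4\equiv 3$, $L_6\equiv 2$, $L_8\equiv 3\pmod 4$, and this finite check (or the equivalent induction in steps of $3$ you mention) settles the lemma cleanly.

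This is a genuinely different route from the paper's own proof. The paper does not use Lemma~\ref{Lem1} at all here; instead it observes that the only nontrivial case is when $L_{2k+2}$ is even, i.e.\ $3\mid k+1$, and then appeals to the identity $L_{2(k+1)}-2=\sum_{i=1}^{k+1}L_{2i-1}$ together with Lemma~\ref{Lem2} (that any three consecutive odd-index Lucas numbers sum to a multiple of $4$) to conclude $L_{2k+2}\equiv 2\pmod 4$. Your approach is more direct and self-contained: it avoids the sum identity and Lemma~\ref{Lem2} entirely, using only the period-$6$ behaviour already established. The paper's approach, on the other hand, yields the sharper information $L_{2k+2}\equiv 2\pmod 4$ in the even case without a separate computation, and ties the lemma to the odd-index machinery used elsewhere.
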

\begin{proof}
We show that $L_{2k+2}\equiv 2 \pmod{4}$ whenever the Lucas number $L_{2k+2}$ is even. Using the well-known formulas for the Lucas sequence, we can write
\[L_{2k+2}-2=L_{2(k+1)}-2=\sum_{i=1}^{k+1}L_{2i-1}.\]
According \eqref{Ide1}, since $L_{2k+2}$ is an even number when $3|k+1$. Hence, for positive integers $k'$, we have
\[ \sum_{i=1}^{k+1}L_{2i-1}=\sum_{i=1}^{3k'}L_{2i-1}.\]
We will show by induction on $k'$ that
\[\sum_{i=1}^{3k'}L_{2i-1}\equiv 0\pmod{4}.\]
 For $k' =1$ we have 
\[\sum_{i=1}^{3}L_{2i-1}=16\equiv 0\pmod{4}.\]
Assume now that it holds for all positive integers less than some $k'\geq 2$. For $k'+1$ we have
\[\sum_{i=1}^{3(k'+1)}L_{2i-1}=\sum_{i=1}^{3k'}L_{2i-1}+\sum_{i=3k'+1}^{3k'+3}L_{2i-1}.\]
Using the Lemma \eqref{Lem2}, $\sum_{i=3k'+1}^{3k'+3}L_{2i-1}\equiv 0\pmod{4}$, which 
completes the proof.
\end{proof}
\section{Main Theorems}
In this section, we focus on the average of Fibonacci sequences (OEIS: \href{https://oeis.org/A111035 }{A111035 }). Looking at the first few values of this sequence we see that most of these numbers are divisible by $24$. \\
In our proof, we use often identity \eqref{Ide2}, which states that the sum of the first $n$ Fibonacci numbers is $F_{n+2}-1$. (this sequence also appears in the OEIS, see \href{https://oeis.org/A000071 }{A000071 }).
\begin{theorem}\label{Thm2}
Let $n$ be a positive integer. There are infinitely many numbers such that
\[n \vert\sum_{i=1}^n F_i.\]
\end{theorem}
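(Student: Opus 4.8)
The plan is to exhibit an explicit infinite family of $n$ for which $n \mid \sum_{i=1}^n F_i = F_{n+2}-1$. The cleanest candidates come from the factorizations in Theorem~\ref{Rec}: for $n = 4k+2$ we have $\sum_{i=1}^{n} F_i = F_{4k+2}-1 = F_{2k}L_{2k+2}$, so it suffices to show $(4k+2) \mid F_{2k}L_{2k+2}$ for infinitely many $k$. Writing $4k+2 = 2(2k+1)$, we need a factor of $2$ and a factor of $2k+1$ on the right-hand side. This suggests restricting to a subfamily of $k$ where divisibility by $2k+1$ is forced by identity~\eqref{Ide5} (the fact that $F_a \mid F_b \iff a \mid b$ for $a \geq 3$) together with the known relation between indices and ranks of apparition.

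First I would handle the factor of $2$: by \eqref{Ide1}, $2 \mid F_{2k}$ exactly when $3 \mid 2k$, i.e. $3 \mid k$; so I restrict to $k$ a multiple of $3$, say $k = 3m$. Then $n = 12m+2$ and the sum is $F_{6m}L_{6m+2}$, with $2 \mid F_{6m}$ already guaranteed. It remains to force $(2k+1) = (6m+1) \mid F_{6m}L_{6m+2}$. Here I would choose $m$ so that $6m+1$ divides one of the two factors. A concrete way: pick $m$ such that $6m+1 \mid F_{6m}$. Since the rank of apparition $z(p)$ of a prime $p$ divides $p - \left(\tfrac{5}{p}\right)$, for primes $p \equiv 1,4 \pmod 5$ we get $z(p) \mid p-1$; if moreover $p \equiv 1 \pmod 6$ and $z(p) \mid p - 1$ with $p - 1$ of the shape $6m$ where $p = 6m+1$, then $p \mid F_{p-1} = F_{6m}$ — and there are infinitely many such primes $p \equiv 1 \pmod{30}$ by Dirichlet. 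For any such $p$, setting $n = p+1 = 6m+2$... wait, one must recheck the index bookkeeping; the safe version is to simply take $n = 4k+2$ with $k = 3m$ chosen from this infinite set of primes, and verify $n \mid F_{2k}L_{2k+2}$ directly.

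Alternatively — and this is the route I would actually write up, since it sidesteps the rank-of-apparition subtleties — I would use \eqref{Ide4}, namely $24 \mid F_{12k}$. Combined with \eqref{Ide2}, note that $\sum_{i=1}^{12k-2} F_i = F_{12k}-1$, which is \emph{not} obviously divisible by $12k-2$. Instead, I would look for $n$ with $F_{n+2}-1$ itself having $n$ as a factor via Theorem~\ref{Rec}: among $n \in \{4k+2, 4k+3, 4k+4, 4k+5\}$ the factorization always splits $F_{n+2}-1$ as $F_aL_b$ with $a+b = n+2$ and $a,b$ of opposite parity patterns; in each case one shows divisibility for a suitable residue class of $k$ using \eqref{Ide1}, \eqref{Ide5}, Lemma~\ref{Lem1}, and Lemma~\ref{Lem3}. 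Taking $n = 4k+4 = F_{2k+3}\cdot$-index-compatible values (e.g. the family $n = 24\ell$ suggested by the $n=24$ example, where $\sum_{i=1}^{24} F_i = F_{26}-1 = 121392 = 24 \cdot 5058$) and arguing that divisibility propagates along an arithmetic progression of $\ell$ gives the infinitude.

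The main obstacle is the number-theoretic input needed to force the \emph{odd} part of $n$ (the factor $2k+1$ in the $n = 4k+2$ case) to divide $F_{2k}L_{2k+2}$: parity and the mod-$4$ lemmas only control powers of $2$, so one genuinely needs either Dirichlet's theorem on primes in progressions together with properties of the rank of apparition, or a self-contained divisibility identity from Theorem~\ref{Rec} that I can iterate. I expect the write-up to pick one explicit progression — most likely $n \equiv 0 \pmod{24}$, matching the OEIS observation that "most of these numbers are divisible by $24$" — prove $24 \mid \sum_{i=1}^{n} F_i$ for all $n$ in that progression by reducing via \eqref{Ide2} to a statement about $F_{n+2}-1$ and checking it with \eqref{Ide4} and Theorem~\ref{Rec}, and thereby conclude there are infinitely many valid $n$.
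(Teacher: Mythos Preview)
Your proposal sketches several directions but none of them is carried to a proof, and the most concrete one is actually false.

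First, there is an index slip at the outset: by \eqref{Ide2} one has $\sum_{i=1}^{n}F_i=F_{n+2}-1$, so for $n=4k+2$ the sum is $F_{4k+4}-1=F_{2k+3}L_{2k+1}$ (the third line of Theorem~\ref{Rec}), not $F_{4k+2}-1=F_{2k}L_{2k+2}$. The factorization $F_{2k}L_{2k+2}$ belongs to $n=4k$, which is in fact what the paper uses. This slip propagates through your ``$4k+2=2(2k+1)$'' bookkeeping.

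Second, and more seriously, the route you say you would ``actually write up'' does not work: it is \emph{not} true that every $n\equiv 0\pmod{24}$ satisfies $n\mid\sum_{i=1}^{n}F_i$. From the very OEIS list quoted in the paper, $n=168=24\cdot 7$ is absent (the list jumps from $144$ to $192$). So there is no arithmetic progression of step $24$ along which divisibility ``propagates''; the odd part of $n$ genuinely has to be controlled, and \eqref{Ide4} plus Theorem~\ref{Rec} alone cannot do that.

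Your Dirichlet/rank-of-apparition branch could in principle be salvaged (after fixing the indices), but you stop at ``one must recheck the index bookkeeping'' without doing so, and it imports machinery the paper does not use.

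What the paper actually does is different from all of your branches. It takes $n=4k$, reduces (as a \emph{sufficient} condition) to finding infinitely many $k''$ with $k''\mid F_{12k''}$, and then produces such $k''$ by a self-referential tower: if $\ell\mid F_{3\ell}$ then, setting $k''=F_{3\ell}$, one gets $k''\mid F_{12k''}$ via \eqref{Ide5}; iterating gives $k''=F_{3F_{3F_{3\cdots}}}$ and hence infinitely many admissible values. This recursive construction is the missing idea in your proposal.
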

\begin{proof}
For a positive integer $k$, consider $n=4k$ such that $4k|\sum_{i=1}^{4k} F_i$. Identity \eqref{Ide2} and Theorem  \eqref{Rec} give
\[4k|\sum_{i=1}^{4k} F_i=F_{4k+2}-1=F_{2k}L_{2k+2}.\]
By Lemma \eqref{Lem3}, $4k\not\vert L_{2k+2}$, which implies that $4k|F_{2k}$. Identity \eqref{Ide3} thus yields
\[4|F_{k+1}^2-F_{k-1}^2.\]
This means that $F_{k+1}$ and $F_{k-1}$ are either both even, or they are both odd.
It follows that $F_k$ is even, thus according identity \eqref{Ide1} we may set $k=3k'$ and so $12k'|F_{6k'}$.
 Put $k'=2k''$, which gives $24k''|F_{12k''}$. According  \eqref{Ide4}, the following holds
\begin{equation}\label{Not}
k''|F_{12k''}
\end{equation} 
Now, to conclude the proof, it is sufficient to show  that there are infinitely many numbers such that $k''|F_{12k''}$. Put $k''=F_{3\ell}$, so $F_{3\ell}|F_{12F_{3\ell}}$, by \eqref{Ide5}, $3\ell|12F_{3\ell}$. Since $3|12F_{3\ell}$, we have $\ell|F_{3\ell}$.

Continuing this process we put $\ell=F_{3d}$, with $d$ a positive integer. So, $F_{3d}|F_{3F_{3d}}$, and this leads to $3d|3F_{3d}$, thus,  $d|F_{3d}$. Continuing this process infinitely many times, we have that $k''$ is equal to the following sequence 
\[k''=F_{3F_{3F_{3F_{\ddots}}}}.\]
\end{proof}
In the proof of the Theorem \eqref{Thm2}, using the
properties of the Fibonacci numbers, there are infinitely many numbers $k''$ such that $k''|F_{k''}$ and $F_{k''}|F_{12k''}$, We have, there are infinity many numbers such that $k''|F_{12k''}$.\\
In the following theorem, we proved the result of the Theorem \eqref{Thm2} is true for Lucas sequence. First, we recall the following known formula for Lucas sequences (see \cite{Kos}, page $111$). 
\begin{lemma}\label{Luca2}
Let $m\geq n$ be positive integers. Then
$$L_{m+n}-L_{m-n}=
\left\{
   \begin{array}{ll}
       
                L_mL_n                 & \text{if $ n$ is odd}\\
                5F_mF_n& \quad       \text{otherwise.}
         \end{array}
         \right.
         $$
\end{lemma}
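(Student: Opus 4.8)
The plan is to read the identity straight off Binet's formula, using $L_k=\alpha^k+\beta^k$, $F_k=\tfrac{1}{\sqrt5}(\alpha^k-\beta^k)$, and the relation $\alpha\beta=-1$. I would expand the two candidate right-hand sides directly. For the product of Lucas numbers,
\[
L_mL_n=(\alpha^m+\beta^m)(\alpha^n+\beta^n)=\alpha^{m+n}+\beta^{m+n}+(\alpha\beta)^n(\alpha^{m-n}+\beta^{m-n})=L_{m+n}+(-1)^nL_{m-n},
\]
and, in the same way,
\[
5F_mF_n=(\alpha^m-\beta^m)(\alpha^n-\beta^n)=\alpha^{m+n}+\beta^{m+n}-(\alpha\beta)^n(\alpha^{m-n}+\beta^{m-n})=L_{m+n}-(-1)^nL_{m-n}.
\]
Here $m\ge n$ guarantees $m-n\ge 0$, so all exponents are nonnegative and no extension of the index is needed.

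From these two master identities the lemma is immediate: if $n$ is odd then $(-1)^n=-1$, and the first display becomes $L_mL_n=L_{m+n}-L_{m-n}$; if $n$ is even then $(-1)^n=1$, and the second display becomes $5F_mF_n=L_{m+n}-L_{m-n}$. That is the entire argument. If one prefers to avoid $\sqrt5$, I would instead fix the parity of $n$ and induct on $m\ge n$: the base cases $m=n$ and $m=n+1$ are short computations using $L_0=2$, $L_1=1$, and the inductive step is automatic because $L_{m+n}$, $L_{m-n}$, $L_m$ and $F_m$ each satisfy the Fibonacci recurrence in $m$, hence so do both sides of the claimed equality.

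There is no genuinely hard step here; the only points that need care are the sign bookkeeping of $(-1)^n$ across the two parities and the boundary case $m=n$, where $L_{m-n}=L_0=2$ and one should verify via $L_{2n}=L_n^2-2(-1)^n=5F_n^2+2(-1)^n$ that the stated formula still holds. Extending to the convention $L_{-k}=(-1)^kL_k$ mentioned in the introduction would also make the identity valid for $m<n$, but the lemma as stated only requires $m\ge n$, so I would not pursue that.
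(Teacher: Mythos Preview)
Your argument via Binet's formula is correct: the two product expansions $L_mL_n=L_{m+n}+(-1)^nL_{m-n}$ and $5F_mF_n=L_{m+n}-(-1)^nL_{m-n}$ are derived cleanly, and the parity split follows at once. The paper does not supply its own proof of this lemma; it simply recalls it as a known identity with a citation to Koshy (page~111), so there is no in-paper argument to compare against. Your Binet-based derivation is the standard route and would be entirely appropriate here.
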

\begin{theorem}\label{Luca3}
Let $n$ be a positive integer. There are infinitely many numbers such that
\[n \vert\sum_{i=1}^n L_i.\]
\end{theorem}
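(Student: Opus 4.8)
The plan is to mirror the proof of Theorem~\ref{Thm2} step for step, using identity~\eqref{Luc1} in place of~\eqref{Ide2} and Lemma~\ref{Luca2} in place of Theorem~\ref{Rec}. Since $3=L_2$, identity~\eqref{Luc1} gives $\sum_{i=1}^{n}L_i=L_{n+2}-L_2$, so I would again restrict to $n=4k$ and apply Lemma~\ref{Luca2} with $m=2k+2$ and second index $2k$ (note that $m+2k=4k+2$, that $m-2k=2$, and that $2k$ is even). This yields
\[
\sum_{i=1}^{4k}L_i \;=\; L_{4k+2}-L_2 \;=\; 5\,F_{2k+2}F_{2k}.
\]

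With this identity in hand, the theorem reduces to producing infinitely many $k$ with $4k\mid 5F_{2k+2}F_{2k}$, and the natural sufficient condition is $4k\mid F_{2k}$, since then $4k\mid F_{2k}\mid 5F_{2k+2}F_{2k}=\sum_{i=1}^{4k}L_i$, so $n=4k$ has the desired property. But the argument inside the proof of Theorem~\ref{Thm2} already delivers exactly this: there, the hypothesis $4k\mid F_{2k}$ is propagated through~\eqref{Ide3},~\eqref{Ide1} and~\eqref{Ide4} to $k=6k''$ with $24k''\mid F_{12k''}$, and the self-referential tower $k''=F_{3F_{3F_{\ddots}}}$ supplies infinitely many admissible values (via~\eqref{Ide5} together with the automatic surplus of $2$'s and $3$'s in $F_{12k''}$). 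So I would simply invoke that construction: for each such $k$, the number $n=4k$ satisfies $n\mid\sum_{i=1}^{n}L_i$, which is the assertion.

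Two points will need care when this is written out. First, the index bookkeeping in Lemma~\ref{Luca2}: it is essential that the smaller index $2k$ be \emph{even}, so that the lemma outputs the $5F_mF_n$ branch rather than the $L_mL_n$ branch, since the reduction to Theorem~\ref{Thm2} relies on having the factor $F_{2k}$ at hand. Second, one must confirm that the construction borrowed from Theorem~\ref{Thm2} genuinely yields $4k\mid F_{2k}$, and not merely $4k\mid\sum_{i=1}^{4k}F_i=F_{2k}L_{2k+2}$; that is precisely where Lemmas~\ref{Lem1}--\ref{Lem3} were used there, to strip off the companion factor $L_{2k+2}$. The only real obstacle is the one already hidden in Theorem~\ref{Thm2}: exhibiting an infinite family of $k$ with $4k\mid F_{2k}$ is not elementary, resting on the tower $F_{3F_{3F_{\ddots}}}$ and the divisibility rules~\eqref{Ide1},~\eqref{Ide4},~\eqref{Ide5}. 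If a self-contained alternative were preferred, the Pisano period does the job directly: whenever $\pi(n)\mid n$ one has $L_{n+2}\equiv L_2=3\pmod n$, hence $n\mid\sum_{i=1}^{n}L_i$, and the family $n=3\cdot 2^{t}$ with $t\ge 3$ (for which $\pi(n)\mid n$) already furnishes infinitely many such $n$; I would probably keep the Lemma~\ref{Luca2} argument as the main proof and record the Pisano-period family as a remark.
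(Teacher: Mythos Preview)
Your proposal is correct and follows essentially the same route as the paper: apply Lemma~\ref{Luca2} with $m=2k+2$ and even second index $2k$ to factor $\sum_{i=1}^{4k}L_i=L_{4k+2}-3=5F_{2k+2}F_{2k}$, then invoke the construction from Theorem~\ref{Thm2} to produce infinitely many $k$ with $4k\mid F_{2k}$. Your write-up is in fact more careful than the paper's on two counts---you make explicit which branch of Lemma~\ref{Luca2} is being used, and you flag that what is actually needed from Theorem~\ref{Thm2} is the stronger conclusion $4k\mid F_{2k}$ (not merely $4k\mid F_{2k}L_{2k+2}$)---and the Pisano-period alternative you suggest as a remark is a nice self-contained backup that the paper does not mention.
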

\begin{proof}
According Lemma \eqref{Luca2}, for $n=2k$ and $m=2k+2$ we have
\[L_{m+n}-L_{m-n}=L_{4k+2}-L_2=L_{4k+2}-3=5F_{2k+2}F_{2k}.\]
 Identity \eqref{Luc1} give
 \[\sum_{i=1}^{4k} L_i=F_{4k+2}-3=5F_{2k}F_{2k+2}.\]
To conclude the proof, it is sufficient consider
 $n=4k$ such that $4k|\sum_{i=1}^{4k} L_i=5F_{2k}F_{2k+2}$. 
Now, with similar proof of the Theorem \eqref{Thm2},  there are infinitely many numbers $k$, such that $4k|F_{2k}$.
\end{proof}
\begin{theorem}\label{Thm3}
Let $\alpha$ be a non-negative integer. Then
\begin{equation}
3.2^{\alpha+3} | \sum_{i=1}^{3.2^{\alpha+3}}F_i=F_{3.2^{\alpha+3}+2}-1.
\end{equation}
\end{theorem}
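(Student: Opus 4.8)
The plan is to show that $N := 3\cdot 2^{\alpha+3}$ divides $F_{N+2}-1$ by first factoring $F_{N+2}-1$ via Theorem \ref{Rec} and then exhibiting enough powers of $2$ and a factor of $3$ in the Fibonacci factors on the right-hand side. Write $N = 3\cdot 2^{\alpha+3} = 4k$ with $k = 3\cdot 2^{\alpha+1}$; since $N\equiv 2 \pmod 4$ only when $\alpha$ forces parity, note $N$ is always a multiple of $4$ here, so we are in the case $F_{4k+2}-1 = F_{2k}L_{2k+2}$ of Theorem \ref{Rec}. Thus it suffices to prove
\[
3\cdot 2^{\alpha+3} \ \big\vert\ F_{2k}\,L_{2k+2}, \qquad k = 3\cdot 2^{\alpha+1}.
\]
Here $2k = 3\cdot 2^{\alpha+2} = 12\cdot 2^{\alpha}$, so $F_{2k} = F_{12\cdot 2^{\alpha}}$, and by identity \eqref{Ide4} we already know $24 \mid F_{12}$; the real work is to upgrade this to the statement that $2^{\alpha+3}\mid F_{12\cdot 2^\alpha}$, i.e., that each doubling of the index contributes one more factor of $2$.

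The key step I would carry out is a $2$-adic valuation computation. Using the doubling identity \eqref{Ide3} in the form $F_{2m} = F_{m+1}^2 - F_{m-1}^2 = (F_{m+1}-F_{m-1})(F_{m+1}+F_{m-1}) = F_m\,(F_{m+1}+F_{m-1}) = F_m L_m$ (recall $L_m = F_{m+1}+F_{m-1}$), one gets $F_{2m} = F_m L_m$. Now restrict to $m$ divisible by $3$, so that $F_m$ is even by \eqref{Ide1}; for such $m$, $L_m$ is also even by \eqref{Ide1}, but by Lemma \ref{Lem3} (applied with $m = 2k+2$ even, i.e., to even-index Lucas numbers) we have $v_2(L_m) = 1$ exactly. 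Hence $v_2(F_{2m}) = v_2(F_m) + 1$ whenever $3\mid m$. Starting from $F_{12}$ with $v_2(F_{12}) = v_2(144) \ge 3$ and iterating $\alpha$ times through indices $12, 24, 48, \dots, 12\cdot 2^\alpha$ (all divisible by $3$), we obtain $v_2\!\big(F_{12\cdot 2^\alpha}\big) \ge 3 + \alpha$. This gives $2^{\alpha+3}\mid F_{2k}$.

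For the factor of $3$: we also need $3 \mid F_{2k}L_{2k+2}$. Since $2k = 12\cdot 2^\alpha$ is divisible by $3$, identity \eqref{Ide1} (or \eqref{Ide5} with the fact $3\mid F_{12}$) gives $3\mid F_{12\cdot 2^\alpha} = F_{2k}$; actually more is true since $F_4 = 3 \mid F_{2k}$ as $4\mid 2k$. Because $3$ is odd and the factor of $3$ lives in $F_{2k}$ while the extra $2^{\alpha+3}$ also lives in (a different part of the valuation of) the product $F_{2k}L_{2k+2}$, and $\gcd(3,2)=1$, the two divisibilities combine to give $3\cdot 2^{\alpha+3}\mid F_{2k}L_{2k+2} = F_{N+2}-1$, as desired. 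The main obstacle is the inductive $2$-adic valuation step: one must be careful that $v_2(L_m)$ is \emph{exactly} $1$ (not more) for the relevant even indices $m$ so that each doubling gains precisely one factor of $2$ and the exponent bookkeeping is tight; Lemma \ref{Lem3} is exactly what rules out an unexpected extra factor of $2$ from the Lucas side, and the base case $v_2(F_{12}) \ge 3$ must be checked by hand (indeed $F_{12} = 144 = 2^4\cdot 3^2$).
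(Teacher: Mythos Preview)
Your argument is correct and follows essentially the same route as the paper: apply Theorem~\ref{Rec} to get $F_{N+2}-1=F_{2k}L_{2k+2}$ with $2k=12\cdot 2^{\alpha}$, then iterate $F_{2m}=F_mL_m$ to peel off even Lucas factors and accumulate powers of $2$; the paper factors all the way down to $F_3L_3L_6\cdots L_{3\cdot 2^{\alpha+1}}$ and reads off $2^{\alpha+3}$ from $F_3=2$, $L_3=4$, and the remaining even $L$'s, while you start the induction at $F_{12}$ and phrase it via $v_2$, and you take the factor $3$ from $F_{2k}$ (via $F_4\mid F_{2k}$) whereas the paper takes it from $L_{2k+2}$. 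One remark: your appeal to Lemma~\ref{Lem3} and the worry that $v_2(L_m)$ be ``exactly $1$'' is unnecessary---for the divisibility you only need $v_2(L_m)\ge 1$, which already follows from \eqref{Ide1}, so a larger $v_2(L_m)$ would only help, not hurt.
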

\begin{proof}
Let $k\geq 1$ be a positive integer. We will use the Fibonacci identities 
\begin{equation*}
F_{4k+2}-1=F_{2k}L_{2k+2}~\text{and}~ F_{2k}=F_kL_k.
\end{equation*}
Therefore
\begin{eqnarray*}
F_{3.2^{\alpha+3}+2}-1&=&F_{3.2^{\alpha+2}}L_{3.2^{\alpha+2}+2}\\
&=&F_{3.2^{\alpha+1}}L_{3.2^{\alpha+1}}L_{3.2^{\alpha+2}+2}\\
&=&F_{3.2^{\alpha}}L_{3.2^{\alpha}}L_{3.2^{\alpha+1}}L_{3.2^{\alpha+2}+2}\\
&=&\cdots\\
&=&F_3L_3L_6\cdots L_{3.2^{\alpha}}L_{3.2^{\alpha+1}}L_{3.2^{\alpha+2}+2}.
\end{eqnarray*}
But $F_3=2,~L_3=4$ and each of $L_6,\cdots, L_{3.2^{\alpha+1}}$ are divisible by $2$. Also, $L_{3.2^{\alpha+2}+2}$ is divisible  by $3$. This concludes the proof.
\end{proof}
The following theorem is a generalization of  Theorem \eqref{Thm3}.
\begin{theorem}\label{Thmn}
Let $\alpha,\beta$ and $\gamma$ be positive integers. Then
\begin{equation}
2^{\alpha+3}.3^{\beta+1}.5^{\gamma} \vert \sum_{i=1}^{2^{\alpha+3}.3^{\beta+1}.5^{\gamma}} F_i.\
\end{equation}
\end{theorem}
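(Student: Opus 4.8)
The plan is to imitate the proof of Theorem~\ref{Thm3}: rewrite the sum using \eqref{Ide2} together with the first identity of Theorem~\ref{Rec} as a product of Fibonacci and Lucas numbers, and then read off the powers of $2$, $3$ and $5$ one prime at a time. Write $\nu_p$ for the $p$-adic valuation. First I would set $N=2^{\alpha+3}3^{\beta+1}5^{\gamma}$, put $M=3^{\beta+1}5^{\gamma}$ (an odd number divisible by $3$), and write $N=4k$ with $k=2^{\alpha+1}M$. Then \eqref{Ide2} and Theorem~\ref{Rec} give
\[\sum_{i=1}^{N}F_i=F_{N+2}-1=F_{4k+2}-1=F_{2k}L_{2k+2}=F_{2^{\alpha+2}M}\,L_{2k+2},\]
and applying $F_{2m}=F_mL_m$ repeatedly to $F_{2^{\alpha+2}M}$ yields
\[\sum_{i=1}^{N}F_i=F_{M}\Bigl(\prod_{j=0}^{\alpha+1}L_{2^{j}M}\Bigr)L_{2k+2}.\]
It then suffices to show that this product is divisible by $2^{\alpha+3}$, by $3^{\beta+1}$ and by $5^{\gamma}$.

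For the power of $2$ I would simply repeat the count of Theorem~\ref{Thm3}. Since $3\mid M$, identity \eqref{Ide1} forces $F_M$ and every factor $L_{2^{j}M}$ ($0\le j\le\alpha+1$) to be even; moreover $L_3=4$ divides $L_M$ because $M/3$ is odd, so $L_M$ is a multiple of $4$. Collecting these contributions shows $2^{1}\cdot2^{2}\cdot2^{\alpha+1}=2^{\alpha+4}$ divides the product, which is more than enough.

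For the power of $3$ I would isolate the single factor $L_{2M}$. Using the triplication identity $L_{3n}=L_n^{3}-3(-1)^{n}L_n$ (immediate from Binet's formula) one proves by induction on $m$ that $\nu_3\bigl(L_{2\cdot 3^{m}}\bigr)=m+1$: indeed $L_{2\cdot 3^{m}}$ is a multiple of $L_2=3$, so $L_{2\cdot 3^{m}}^{2}-3\equiv-3\pmod 9$ has $3$-adic valuation exactly $1$, while $L_{2\cdot 3^{m+1}}=L_{2\cdot 3^{m}}\bigl(L_{2\cdot 3^{m}}^{2}-3\bigr)$ since $2\cdot 3^{m}$ is even, and the base case is $\nu_3(L_2)=1$. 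As $5^{\gamma}$ is odd, $L_{2\cdot 3^{\beta+1}}\mid L_{2M}$, hence $\nu_3(L_{2M})\ge\beta+2$. For the power of $5$ I would isolate $F_M$ instead: the quintuplication identity $F_{5n}=5F_n\bigl(5F_n^{4}+5(-1)^{n}F_n^{2}+1\bigr)$ shows $\nu_5(F_{5n})=\nu_5(F_n)+1$, because the bracketed factor is $\equiv1\pmod 5$; iterating this $\gamma$ times starting from $F_{3^{\beta+1}}$ (which is prime to $5$) gives $\nu_5(F_M)=\gamma$. Combining the three estimates shows that $2^{\alpha+3}3^{\beta+1}5^{\gamma}$ divides $\sum_{i=1}^{N}F_i$, as claimed.

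The step I expect to be the main obstacle is the $3$-adic and $5$-adic bookkeeping. The paper only records ``first level'' divisibilities such as \eqref{Ide1} and \eqref{Ide4}, so one has to bring in the multiplication formulas above (equivalently, the lifting-the-exponent behaviour attached to the rank of apparition of $3$ and $5$) and, at each step of the induction, verify that the complementary factor is a unit modulo the relevant prime. The $2$-adic part and the overall skeleton are identical to what was already carried out for powers of $2$ in Theorem~\ref{Thm3}.
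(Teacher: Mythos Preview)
Your argument is correct, and it follows the same overall skeleton as the paper---factor $\sum_{i=1}^{4k}F_i=F_{2k}L_{2k+2}$ via Theorem~\ref{Rec} and then account for each prime separately---but the execution is organised differently. The paper keeps the single Fibonacci number $F_{N/2}=F_{12k}$ intact (with $k=2^{\alpha}3^{\beta}5^{\gamma}$), strips off a factor of $24$ using \eqref{Ide4}, and then runs three parallel inductions to show that $2^{\alpha}$, $3^{\beta}$ and $5^{\gamma}$ each divide $F_{12k}$; each inductive step is of the form ``$pF_m\mid F_{pm}$'', justified somewhat loosely from $p\mid F_{pm}$ and $F_m\mid F_{pm}$. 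You instead iterate $F_{2m}=F_mL_m$ all the way down to the odd core $M=3^{\beta+1}5^{\gamma}$, obtaining $F_M\cdot L_M\cdot L_{2M}\cdots L_{2^{\alpha+1}M}\cdot L_{2k+2}$, and then assign each prime to a convenient factor: the $2$'s are spread across the product exactly as in Theorem~\ref{Thm3}, the power of $3$ sits inside $L_{2M}$ via the Lucas triplication formula, and the power of $5$ sits inside $F_M$ via the Fibonacci quintuplication formula. What your route buys is precision---you compute exact $p$-adic valuations, so the lifting steps are airtight and you actually get one more factor of $2$ and $3$ than needed---at the cost of importing two facts not listed in the paper's preliminaries (the identity $L_a\mid L_b$ when $b/a$ is odd, and the explicit triplication/quintuplication formulas). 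The paper's route is slightly more economical in that everything happens inside a single Fibonacci index, so no Lucas divisibility facts beyond \eqref{Ide1} are required.
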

\begin{proof}
In Theorem \eqref{Thm3}, we proved the cases $\alpha=\beta=\gamma=0$. Now, let $\beta,\gamma >0$. Then
\[4|2^{\alpha+3}.3^{\beta+1}.5^{\gamma} .\]
By the proof of Theorem \eqref{Thm2}, there are positive integers $k$  such that 
\[24k|F_{12k}.\]
By \eqref{Ide4}, it can be seen that, 
\begin{equation}\label{Par}
k|F_{12k}
\end{equation} 
To avoid any confusion, we set
$k=2^{\alpha}.3^{\beta}.5^{\gamma}.$
 Now, by utilizing \eqref{Par}, we get 
 \[2^{\alpha}.3^{\beta}.5^{\gamma}|F_{2^{\alpha+2}.3^{\beta+1}.5^{\gamma}}.\]
 To conclude the proof we need the following three steps:
 \begin{itemize}
\item[] \textbf{Step I.} 
 First, we shall prove that $2^{\alpha}|F_{3.2^{\alpha+2}}$. The proof of this
step can be achieved by induction. For $\alpha=1$, the claim is true, since $2|F_{3.8}=46368 $. Suppose the claim is true for all positive integers less than $\alpha$. We show that the same is true for $\alpha + 1$, that is $2^{\alpha+1}|2F_{3.2^{\alpha+3}}.$ By multiplying to $2$, we can write
\[2^{\alpha+1}|2F_{3.2^{\alpha+2}}.\]
Note that
\[2F_{3.2^{\alpha+2}}|F_{3.2^{\alpha+3}}.\]
Since $2|F_3=2$ and $F_3|F_{3.2^{\alpha+3}}$, we have by way of identity \eqref{Ide5} that
\[F_{3.2^{\alpha+2}}|F_{3.2^{\alpha+3}}.\]
Hence, $2^{\alpha+1}|2F_{3.2^{\alpha+3}}.$
\item[] \textbf{Step II.}  Next we show that $3^{\beta}|F_{2^{\alpha+2}.3^{\beta+1}.5^{\gamma}}$ by induction on $\beta$. For $\beta=1$, according \eqref{Ide5},  $F_3| F_{36.2^{\alpha}.5^{\gamma}}$. Assume the statement holds for all positive integers less than $\beta$. We need to show that the statement also holds for $\beta+1$. Multiplying both sides of the induction hypothesis by 3 gives
\[3^{\beta+1}|3 F_{2^{\alpha+2}.3^{\beta+1}.5^{\gamma}}.\]
To complete the proof of this step, it is sufficient to show
\[3 F_{2^{\alpha+2}.3^{\beta+1}.5^{\gamma}}| F_{2^{\alpha+2}.3^{\beta+2}.5^{\gamma}}.\]
Much like in the previous step, we have $3|F_{2^{\alpha+2}.3^{\beta+2}.5^{\gamma}}$  
and
\[F_{2^{\alpha+2}.3^{\beta+1}.5^{\gamma}}|F_{2^{\alpha+2}.3^{\beta+2}.5^{\gamma}},\]

 whence 
 \[3^{\beta+1}|F_{2^{\alpha+2}.3^{\beta+2}.5^{\gamma}},\]
 which concludes step II.
\item[] \textbf{Step III.}
Finally, using the same arguments of the previous steps, with induction on $\gamma$, we show $5^{\gamma}|F_{5^{\gamma}}$. For $\gamma=1$ it is easy to see that $5|F_5=5$. By induction,  we assume that the statement is true for all positive integers less than $\gamma$. We have to prove that the claim is true for $\gamma+1$. By multiplying the induction hypothesis by $5$,  we obtain
\[5^{\gamma+1}|5F_{5^{\gamma}}.\]
Now, we have to show \[5F_{5^{\gamma}}|F_{5^{\gamma+1}}.\]
Again, since $5|F_{5^{\gamma+1}}$ and $F_{5^{\gamma}}|F_{5^{\gamma+1}}$.
\[5^{\gamma+1}|F_{5^{\gamma+1}}.\]
\end{itemize}
%
According the results of the previous steps, we have
\[
2^{\alpha}.3^{\beta}.5^{\gamma}|F_{2^{\alpha+2}.3^{\beta+1}.5^{\gamma}}.\]
Therefore,

\[2^{\alpha+3}.3^{\beta+1}.5^{\gamma}|\sum_{i=1}^{2^{\alpha+2}.3^{\beta+1}.5^{\gamma}}F_i.\]
\end{proof}
In the same manner of the Theorem \eqref{Thmn}, we obtain the following Theorem.
\begin{theorem}
Let $\alpha,\beta$ and $\gamma$ be positive integers. Then
\begin{equation}
2^{\alpha+3}.3^{\beta+1}.5^{\gamma} \vert \sum_{i=1}^{2^{\alpha+3}.3^{\beta+1}.5^{\gamma}} L_i.\
\end{equation}
\end{theorem}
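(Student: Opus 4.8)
The plan is to mirror the structure of the proof of Theorem \ref{Thmn}, replacing the Fibonacci sum identity \eqref{Ide2} by the Lucas sum identity \eqref{Luc1} and exploiting the factorization of $L_{n+2}-3$ already obtained in the proof of Theorem \ref{Luca3}. Concretely, set $N=2^{\alpha+3}.3^{\beta+1}.5^{\gamma}$; since $4\mid N$ I may write $N=4k$ with $k=2^{\alpha+1}.3^{\beta+1}.5^{\gamma}$, and by \eqref{Luc1} together with Lemma \ref{Luca2} (applied as in Theorem \ref{Luca3} with $m=2k+2$, $n=2k$) we have
\[
\sum_{i=1}^{4k} L_i = L_{4k+2}-3 = 5F_{2k}F_{2k+2}.
\]
So it suffices to show $4k = 2^{\alpha+3}.3^{\beta+1}.5^{\gamma} \mid 5F_{2k}F_{2k+2}$, i.e. $2^{\alpha+3}.3^{\beta+1}.5^{\gamma-1}\mid F_{2k}F_{2k+2}$ (absorbing one factor of $5$ into the leading $5$).

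The next step is to distribute the prime-power divisibility requirements between the two consecutive-even-index Fibonacci numbers $F_{2k}$ and $F_{2k+2}$, just as Steps I--III of Theorem \ref{Thmn} distribute them across a single Fibonacci number. Writing $2k = 2^{\alpha+2}.3^{\beta+1}.5^{\gamma}$, identity \eqref{Ide5} gives $F_{3.2^{\alpha+2}}\mid F_{2k}$, $F_{2^{\alpha+2}.3^{\beta+1}.5^{\gamma}}$ is divisible by $F_3$, $F_9$, $F_5$, etc.; combining this with the three induction arguments already carried out (the lemmas $2^{\alpha}\mid F_{3.2^{\alpha+2}}$, $3^{\beta}\mid F_{2^{\alpha+2}.3^{\beta+1}.5^{\gamma}}$, $5^{\gamma}\mid F_{5^{\gamma}}$) yields $2^{\alpha}.3^{\beta}.5^{\gamma}\mid F_{2k}$, exactly the divisor produced at the end of Theorem \ref{Thmn}. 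The remaining factor $2^{3}.3\mid F_{2k+2}$ (or, more comfortably, $2^3\cdot 3\mid F_{2k}$ as well, since $2^{\alpha+2}.3^{\beta+1}.5^{\gamma}$ is a multiple of $12$ and $24\mid F_{12m}$ by \eqref{Ide4}) supplies the extra $2^3$ and the extra factor of $3$ beyond $3^{\beta}$, and the leftover $5$ from the coefficient supplies the last power of $5$. Multiplying these contributions gives $2^{\alpha+3}.3^{\beta+1}.5^{\gamma}\mid 5F_{2k}F_{2k+2}$, as required.

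I expect the only genuine obstacle to be bookkeeping: one must check that the factor of $5$ in $5F_{2k}F_{2k+2}$ is not already "needed" for the $5^{\gamma}$-divisibility of $F_{2k}$, so that it can legitimately be used to promote $5^{\gamma}$ to $5^{\gamma}$ in the product while the explicit coefficient handles the gap — in fact, since Step III of Theorem \ref{Thmn} already gives $5^{\gamma}\mid F_{2k}$ outright, the explicit $5$ is a bonus and there is slack, so the estimate is comfortable. Similarly one should confirm that $2^{\alpha+2}.3^{\beta+1}.5^{\gamma}$ is a multiple of $12$ whenever $\alpha,\beta,\gamma\ge 1$, so that \eqref{Ide4} applies and delivers the surplus $2^3\cdot 3$; this is immediate. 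Apart from tracking these exponents, every divisibility step is a verbatim repetition of an argument already established in the proof of Theorem \ref{Thmn}, so the proof reduces to invoking that theorem's three inductive steps and the identity $L_{4k+2}-3=5F_{2k}F_{2k+2}$.
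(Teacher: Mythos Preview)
Your overall plan matches the paper exactly: for the Lucas version the paper offers no separate argument and simply writes ``in the same manner of Theorem~\ref{Thmn}'', i.e.\ replace $\sum F_i=F_{n+2}-1$ by $\sum L_i=L_{n+2}-3=5F_{2k}F_{2k+2}$ (the factorization from Theorem~\ref{Luca3}) and reuse Steps~I--III. So at the level of strategy you are doing precisely what the paper intends.

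There is, however, a real gap in the exponent bookkeeping, and it is worth flagging because your text calls it ``comfortable''. With $2k=2^{\alpha+2}\cdot3^{\beta+1}\cdot5^{\gamma}$ one has $2k+2=2\bigl(2^{\alpha+1}\cdot3^{\beta+1}\cdot5^{\gamma}+1\bigr)$; for $\alpha,\beta,\gamma\ge1$ the bracket is $\equiv1\pmod{30}$, so $2k+2$ is twice an odd number and is coprime to $15$. Hence $F_{2k+2}$ is coprime to $30$ (by \eqref{Ide1} and the ranks $\rho(3)=4$, $\rho(5)=5$), and your first option ``$2^{3}\cdot3\mid F_{2k+2}$'' is simply false. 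Your fallback, $24\mid F_{2k}$ from \eqref{Ide4}, is true, but knowing $2^{\alpha}\mid F_{2k}$ and $2^{3}\mid F_{2k}$ separately gives only $2^{\max(\alpha,3)}\mid F_{2k}$, not $2^{\alpha+3}\mid F_{2k}$; the exponents do not add. The same objection kills the ``extra factor of $3$ beyond $3^{\beta}$''. (In fairness, the paper's own proof of Theorem~\ref{Thmn} glosses over the identical issue when it passes from $24\mid F_{12k}$ and $k\mid F_{12k}$ to $24k\mid F_{12k}$.)

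The repair is cheap: run the same inductions with sharper base cases. Since $v_{2}(F_{24})=5$ and $v_{3}(F_{12})=2$, the Step~I argument actually yields $2^{\alpha+4}\mid F_{3\cdot2^{\alpha+2}}$ and the Step~II argument yields $3^{\beta+2}\mid F_{4\cdot3^{\beta+1}}$ (equivalently, invoke Lemma~\ref{Lem4}(ii) directly). Combined with $5^{\gamma}\mid F_{5^{\gamma}}$ this gives $2^{\alpha+4}\cdot3^{\beta+2}\cdot5^{\gamma}\mid F_{2k}$, so $N=2^{\alpha+3}\cdot3^{\beta+1}\cdot5^{\gamma}$ already divides $F_{2k}$ alone, and hence divides $5F_{2k}F_{2k+2}=\sum_{i=1}^{N}L_i$, without needing any contribution from $F_{2k+2}$ or the leading $5$.
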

\section{ Prime numbers and the average of Fibonacci numbers }
In this section, we discuss Theorem \eqref{Thm2} for some special $n$. First, we consider the sum of the first $n$ Fibonacci numbers, when $n$ is a prime number, $p$.  
Before we present our results, we first recall some well-known definitions and theorems. 

 An integer $a$ is called a \textit{quadratic residue modulo $p$} (with $p>2$) if $p\not|a$ and there exists an integer $b$ such that  $a\equiv b^2 \pmod{p}$. Otherwise, it is called a \emph{non-quadratic residue modulo $p$}.\\
\textit{Legendre} introduced the following practical notation:
\begin{eqnarray}
\left(\frac{a}{b}\right)=
\begin{cases}
+1 \hspace{1.5cm} \text{if $a$ is a quadratic residue modulo $p$} ; \\
0 \hspace{1.8cm} \text{if $p$ divides $a$} ;  \\
-1\hspace{1.5cm} \text{otherwise}.
\end{cases}
\end{eqnarray}
We note that for a prime number $p$, and $b=5$,  it is easy to see that the Legendre symbol, $\Big(\frac{p}{5}\Big)$, is equal to
\begin{eqnarray}
\ \left(\frac{p}{5}\right)=
\begin{cases}
+1 \hspace{1.5cm} \text{if $p\equiv \pm 1\pmod{5}$ } ; \\
0 \hspace{1.8cm} \text{if $p\equiv 0\pmod{5}$} ;  \\
-1\hspace{1.5cm} \text{if $p\equiv \pm 2\pmod{5}$}.
\end{cases}
\end{eqnarray}
We shall denote by $\sigma(n)$ the rank of apparition of the Lucas sequence ${L_n}$, if it exists. According \cite{Mor}, the \textit{rank of apparition} of the sequence $\{S_n\}$ is the smallest index $k$ such that $m|S_k$ for some non-zero element $S_k$, provided it exists. The rank of apparition of the Fibonacci sequence $\{F_n\}$ is denoted by $\rho(n)$. 
These numbers are sometimes called \textit{Fibonacci periods} or \textit{Pisano periods}. The initial values of $\rho(n)$ is given OEIS, \href{https://oeis.org/A001602 }{A001602 }.\\
The following lemma concerning Fibonacci and Lucas numbers is well-known (see \cite{Rib}, page $41-55$).
\begin{lemma}\label{Lem4}
Let $n$ be a positive integer and  $\rho(n)$ be the rank of apparition of the Fibonacci sequence. Then
\begin{itemize}
\item [i)] {$m|F_n$ if and only if $\rho(m)|n$;}
\item [ii)] {If $p|F_n$, then $p^e|F_{np^{e-1}}$ for $e\geq 1$;}
\item [iii)] {$(F_n,L_n)|2$.}
\end{itemize}
\end{lemma}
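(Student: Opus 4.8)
The plan is to treat the three parts separately, since they rely on different tools. For parts~(i) and~(iii) the ingredients are already essentially present in the paper (the divisibility property~\eqref{Ide5}, the addition formula, and Binet's formula), so these should be short. Part~(ii), a lifting-of-the-exponent statement, is where the real work lies, and I expect it to be the main obstacle; I would isolate it and prove it by induction on $e$.

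For part~(i), the backward implication is immediate: if $\rho(m)\mid n$ then \eqref{Ide5} gives $F_{\rho(m)}\mid F_n$ (for $m\ge 2$ one has $\rho(m)\ge 3$ since $F_1=F_2=1$, and $m=1$ is trivial), and since $m\mid F_{\rho(m)}$ by definition of the rank of apparition, we conclude $m\mid F_n$. For the forward implication, suppose $m\mid F_n$ and write the division with remainder $n=q\rho(m)+s$ with $0\le s<\rho(m)$. Applying the addition formula $F_{a+b}=F_{a+1}F_b+F_aF_{b-1}$ with $a=q\rho(m)$, $b=s$, and using that $\rho(m)\mid q\rho(m)$ forces $m\mid F_{q\rho(m)}$ (again by \eqref{Ide5} together with $m\mid F_{\rho(m)}$), I would reduce the congruence $F_n\equiv 0\pmod m$ to $F_{q\rho(m)+1}F_s\equiv 0\pmod m$. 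Because consecutive Fibonacci numbers are coprime (an easy induction from $\gcd(F_{k+1},F_k)=\gcd(F_{k-1},F_k)$), $m$ is coprime to $F_{q\rho(m)+1}$, whence $m\mid F_s$; minimality of $\rho(m)$ then forces $s=0$, i.e.\ $\rho(m)\mid n$.

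For part~(iii), I would square Binet's formula to record the Pell-type identity $L_n^2-5F_n^2=(\alpha^n+\beta^n)^2-(\alpha^n-\beta^n)^2=4\alpha^n\beta^n=4(-1)^n$, using $\alpha\beta=-1$. Setting $d=\gcd(F_n,L_n)$, from $d\mid F_n$ and $d\mid L_n$ we get $d^2\mid 5F_n^2$ and $d^2\mid L_n^2$, hence $d^2\mid(L_n^2-5F_n^2)=\pm4$; therefore $d^2\mid4$ and $d\mid2$, as claimed.

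The crux is part~(ii). I would argue by induction on $e$, the base case $e=1$ being the hypothesis $p\mid F_n$. For the inductive step it suffices to prove the local statement that $p\mid F_M$ implies $p\mid F_{Mp}/F_M$ (the quotient being an integer by \eqref{Ide5}), since then the $p$-adic valuation satisfies $v_p(F_{Mp})\ge v_p(F_M)+1$, and applying this with $M=np^{e-1}$ advances the exponent. To establish the local statement for odd $p$, I would put $A=\alpha^M,\ B=\beta^M$ and expand, via $A,B=\tfrac12(L_M\pm\sqrt5\,F_M)$ with $A-B=\sqrt5\,F_M$,
\[
\frac{F_{Mp}}{F_M}=\frac{A^p-B^p}{A-B}=\frac{1}{2^{p-1}}\sum_{\substack{1\le k\le p\\ k\ \mathrm{odd}}}\binom{p}{k}L_M^{\,p-k}\,(5F_M^2)^{(k-1)/2}.
\]
The $k=1$ term equals $pL_M^{p-1}$, while every $k\ge3$ term carries a factor $F_M^{2}$; thus $2^{p-1}F_{Mp}/F_M\equiv pL_M^{p-1}\pmod{F_M^2}$, and since $p\mid F_M$ both $pL_M^{p-1}$ and the $F_M^2$-multiple are divisible by $p$. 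As $\gcd(2^{p-1},p)=1$, this yields $p\mid F_{Mp}/F_M$. The case $p=2$ I would dispatch separately using the identity $F_{2M}=F_ML_M$ already employed in Theorem~\eqref{Thm3}: if $2\mid F_M$ then $3\mid M$ by \eqref{Ide1}, hence $2\mid L_M$, so $v_2(F_{2M})=v_2(F_M)+v_2(L_M)\ge v_2(F_M)+1$. The main technical care needed is justifying the binomial manipulation in $\mathbb{Z}[\sqrt5]$ and checking that the power-of-two denominator causes no trouble, which is exactly where I expect the argument to demand the most attention.
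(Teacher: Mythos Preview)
Your proof is correct in all three parts, and each argument is the standard one. Note, however, that the paper does not actually prove this lemma: it is stated as ``well-known'' with a citation to Ribenboim~\cite{Rib}, pp.~41--55, and no proof is given. So there is nothing to compare against; your write-up simply supplies what the paper outsources to the literature. In that sense your version is strictly more self-contained, at the cost of a page or so of routine verification. The one place worth tightening slightly is the sentence in part~(ii) about ``justifying the binomial manipulation in $\mathbb{Z}[\sqrt5]$'': in fact no such justification is needed, since your displayed identity for $2^{p-1}F_{Mp}/F_M$ is an equality of rational integers once you collect terms, and the congruence modulo $F_M^2$ is then purely in $\mathbb{Z}$.
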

The property (ii) of the Lemma \eqref{Lem4} known as the \textit{law of
apparition } of Lucas sequences of the first kind, in general. We will also need the following lemma from \cite{Mor}.
\begin{lemma}\label{Lem5}
The odd prime power $p^r$ is a divisor of the Lucas sequence $\{L_n\}$ if and only if $\rho(p^r)$ is even. If $p^r$ is a divisor of the sequence $L_n$, then
\[\sigma(p^r)=\frac{\rho(p^r)}{2},\] and
\[p^r|L_n\Longleftrightarrow n\equiv \frac{\rho(p^r)}{2}\pmod{\rho(p^r)}.\]
\end{lemma}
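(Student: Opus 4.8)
The plan is to pivot everything on the duplication identity $F_{2n}=F_nL_n$ together with the coprimality statement $(F_n,L_n)\mid 2$ from part (iii) of Lemma \eqref{Lem4}. Writing $\rho=\rho(p^r)$ for brevity and recalling from part (i) of Lemma \eqref{Lem4} that $p^r\mid F_k$ exactly when $\rho\mid k$, the central elementary observation I would isolate first is this: since $p$ is an odd prime, $p$ cannot divide both $F_n$ and $L_n$ (any common divisor divides $2$), so in the factorization $F_{2n}=F_nL_n$ the $p$-adic valuation splits cleanly, with all of the $p$-power content of $F_{2n}$ sitting in exactly one of the two factors. This is the lever that converts statements about $F_{2n}$ into statements about $L_n$.

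For the forward direction, suppose $p^r\mid L_n$ for some $n$. Then $p^r\mid F_nL_n=F_{2n}$, so $\rho\mid 2n$ by part (i) of Lemma \eqref{Lem4}. On the other hand, $p\mid L_n$ forces $p\nmid F_n$ by the coprimality remark, hence $p^r\nmid F_n$ and therefore $\rho\nmid n$. From $\rho\mid 2n$ and $\rho\nmid n$ I would conclude that $\rho$ must be even: were $\rho$ odd, it would be coprime to $2$ and $\rho\mid 2n$ would already give $\rho\mid n$, a contradiction. Writing $\rho=2s$, the conditions $\rho\mid 2n$ and $\rho\nmid n$ read $s\mid n$ and $2s\nmid n$, that is $n\equiv s\pmod{2s}$, which is the claimed congruence with $s=\rho(p^r)/2$.

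The converse is where the valuation bookkeeping does the real work, and I expect it to be the main obstacle. Assume $\rho=2s$ is even and $n\equiv s\pmod{2s}$, so $s\mid n$ but $2s\nmid n$. Then $\rho=2s\mid 2n$, whence $p^r\mid F_{2n}=F_nL_n$, while $\rho\nmid n$ gives $p^r\nmid F_n$, i.e. $v_p(F_n)<r$ where $v_p$ denotes the $p$-adic valuation. The delicate point is that $p^r\mid F_nL_n$ together with $p^r\nmid F_n$ does not by itself force $p^r\mid L_n$; here I would use the clean splitting. Since $p$ is odd, $\min\{v_p(F_n),v_p(L_n)\}=0$. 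If $v_p(F_n)$ were positive, then $v_p(L_n)=0$ and so $v_p(F_{2n})=v_p(F_n)<r$, contradicting $v_p(F_{2n})\geq r$; hence $v_p(F_n)=0$ and $v_p(L_n)=v_p(F_{2n})\geq r$, that is $p^r\mid L_n$.

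Finally I would assemble the two halves. The equivalence that $p^r$ divides some $L_n$ if and only if $\rho$ is even follows because the forward direction extracts evenness of $\rho$ from any divisor, while taking $n=s$ (which trivially satisfies $n\equiv s\pmod{2s}$) in the converse exhibits $p^r\mid L_s$ whenever $\rho$ is even. The divisibility criterion $p^r\mid L_n\iff n\equiv \rho/2\pmod{\rho}$ is exactly the two implications proved above, and since the solution set of that congruence is $\{s,3s,5s,\dots\}$, its least element $s=\rho/2$ is the smallest index with $p^r\mid L_n$, giving $\sigma(p^r)=\rho(p^r)/2$.
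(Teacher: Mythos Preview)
The paper does not actually prove this lemma; it is quoted verbatim from \cite{Mor} with the preface ``We will also need the following lemma from \cite{Mor}.'' So there is no in-paper proof to compare against.

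Your argument is correct and is essentially the standard one. The key moves are exactly as you identify: the duplication formula $F_{2n}=F_nL_n$ together with $(F_n,L_n)\mid 2$ from Lemma~\ref{Lem4}(iii) forces the $p$-adic valuation of $F_{2n}$ to lie entirely in one factor when $p$ is odd, and Lemma~\ref{Lem4}(i) then translates the divisibility into the arithmetic condition on $n$ modulo $\rho$. Your treatment of the converse is careful at the right spot: the observation that $p^r\nmid F_n$ alone does not give $p^r\mid L_n$, and that one must rule out $0<v_p(F_n)<r$ via the clean split, is precisely the point where a hasty argument would slip. Finally, extracting $\sigma(p^r)=\rho(p^r)/2$ as the least element of the residue class $\{s,3s,5s,\dots\}$ is clean. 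This is a self-contained proof suitable to replace the citation if desired.
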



We are now ready to state our theorem. 
\begin{theorem}\label{10}
Let $p$ be an odd prime number. Then
\begin{equation}
p\not\vert \sum_{i=1}^p F_i.
\end{equation}
\end{theorem}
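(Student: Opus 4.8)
The plan is to use the closed form $\sum_{i=1}^{p}F_i = F_{p+2}-1$ from \eqref{Ide2} and reduce the divisibility question to understanding $F_{p+2} \pmod p$ via the rank of apparition $\rho(p)$. First I would recall the classical congruences governing $F_p$ and $F_{p+1}$ modulo a prime $p$, which depend on the Legendre symbol $\left(\frac{p}{5}\right)$: when $p\equiv\pm1\pmod 5$ one has $\rho(p)\mid p-1$, and when $p\equiv\pm2\pmod 5$ one has $\rho(p)\mid p+1$; the case $p=5$ is special with $5\mid F_5$. Combined with Lemma \eqref{Lem4}(i), these tell us exactly when $p\mid F_{p+2}$, and more precisely let us compute $F_{p+2}\bmod p$. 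The goal is to show $F_{p+2}\not\equiv 1\pmod p$ in every case, so that $p\nmid F_{p+2}-1$.

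The key steps, in order: (1) Handle $p=5$ directly by checking $\sum_{i=1}^{5}F_i = F_7-1 = 12$, which is not divisible by $5$. (2) For $p\equiv \pm1\pmod 5$: here $p\mid F_{p-1}$, so $\rho(p)\mid p-1$. Using the identity $F_{p+2}=F_{p-1}+2F_{p+1}$ (or simply shifting the recurrence) together with $F_{p+1}\equiv \left(\frac{p}{5}\right)\equiv 1\pmod p$ when $p\equiv\pm1$, and $F_p\equiv 0$? — no, $F_p\equiv\left(\frac5p\right)$; I must be careful. The cleaner route is to use $F_{p+2}-1 = F_{p+1}+F_p-1$ and the standard facts $F_p\equiv\left(\frac{p}{5}\right)\pmod p$ and $F_{p+1}\equiv \tfrac{1}{2}\left(1+\left(\frac{p}{5}\right)\right)\pmod p$ (these follow from Binet's formula modulo $p$ and Fermat's little theorem applied to $\alpha,\beta$ in $\mathbb{F}_p$ or $\mathbb{F}_{p^2}$). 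Substituting, $F_{p+2}-1 \equiv \left(\frac{p}{5}\right) + \tfrac12\left(1+\left(\frac{p}{5}\right)\right) - 1 = \tfrac32\left(\frac{p}{5}\right) - \tfrac12 \pmod p$. When $\left(\frac{p}{5}\right)=1$ this is $1\not\equiv 0\pmod p$ for $p>1$; when $\left(\frac{p}{5}\right)=-1$ this is $-2\not\equiv 0\pmod p$ for $p>2$. (3) Conclude in all cases $p\nmid F_{p+2}-1$.

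The main obstacle I anticipate is establishing the two congruences $F_p\equiv\left(\frac{p}{5}\right)$ and $F_{p+1}\equiv\tfrac12\left(1+\left(\frac{p}{5}\right)\right)\pmod p$ rigorously, since Binet's formula involves $\sqrt5$ and one must argue in $\mathbb{F}_p$ (when $5$ is a QR mod $p$, i.e.\ $p\equiv\pm1\pmod5$) versus $\mathbb{F}_{p^2}$ (when $p\equiv\pm2$), invoking the Frobenius $x\mapsto x^p$ to see $\alpha^p=\beta$ in the inert case. These are standard but need the Legendre-symbol dichotomy already set up in the paper, so the exposition should lean on that. Once the congruences are in hand, the arithmetic collapsing $F_{p+2}-1$ to $\pm1$ or $-2$ modulo $p$ is immediate, and the theorem follows; a sanity check against small primes ($p=3$: $F_5-1=4$; $p=7$: $F_9-1=33$; $p=11$: $F_{13}-1=232$; $p=13$: $F_{15}-1=609=3\cdot 7\cdot 29$) confirms the claim.
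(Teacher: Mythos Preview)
Your approach is correct and is genuinely different from the paper's. Both begin with $\sum_{i=1}^pF_i=F_{p+2}-1$, but from there the routes diverge. The paper splits into the cases $p=4k+1$ and $p=4k+3$, uses the factorizations $F_{p+2}-1=F_aL_b$ from Theorem~\ref{Rec}, and then rules out $p\mid F_a$ and $p\mid L_b$ separately by invoking $\rho(p)\mid p-\left(\frac{p}{5}\right)$ together with Lemma~\ref{Lem4} and Lemma~\ref{Lem5}: in each subcase the divisibility forces an equation of the shape $k'\bigl(4k+c-\left(\frac{p}{5}\right)\bigr)=2k+O(1)$, which is impossible by size. You instead compute $F_{p+2}-1\pmod p$ directly from the classical congruences $F_p\equiv\left(\frac{p}{5}\right)$ and $F_{p+1}\equiv\tfrac12\bigl(1+\left(\frac{p}{5}\right)\bigr)$, obtaining $F_{p+2}-1\equiv 1$ or $-2$ depending on the Legendre symbol, neither of which vanishes for an odd prime. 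Your argument is shorter and avoids the four-way case split and the factorization machinery; the paper's argument, on the other hand, stays within the toolkit (the $F_aL_b$ decompositions and rank-of-apparition lemmas) used elsewhere in the paper, so it is thematically consistent even if longer. One cosmetic remark: rather than carrying the $\tfrac12$ through the computation, it is cleaner to treat the two values of $\left(\frac{p}{5}\right)$ separately from the start, since in each case $F_{p+1}$ is simply $1$ or $0$ and no inversion of $2$ is needed.
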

\begin{proof}
Assume that $p$ is an odd prime number such that
\[p\vert \sum_{i=1}^p F_i.\]
We investigate the cases $p=4k+1$ and $p=4k+3$ separately.
 \begin{itemize}
\item[] \textbf{Case I.} 
Suppose that $p=4k+1$. According \eqref{Ide2} and Theorem \eqref{Rec}, we have
\begin{equation}
p\vert \sum_{i=1}^p F_i=F_{4k+3}-1=F_{2k+2}L_{2k+1}.
\end{equation}
Hence, $p|F_{2k+2}$ or $p|L_{2k+1}$. Suppose first that $p|F_{2k+2}$. According Lemmas \eqref{Lem4} and \eqref{Lem5}, there exists a positive integer $k'$ such that
\[k'\left(p-\left(\frac{p}{5}\right)\right)=2k+2.\]
This implies
\[k'\left(4k+1-\left(\frac{p}{5}\right)\right)=2k+2,\]
which is impossible, since $\left(\frac{p}{5}\right)=\pm 1$ .

Now, suppose that  $p|L_{2k+1}$. From the identity $F_{2n}=L_nF_n$ we have $p|F_{4k+2}$. So, by Lemma \eqref{Lem5}, there exists a positive integer $k''$ such that 
\begin{equation}
4k+2=k''\left(\frac{p-\left(\frac{p}{5}\right)}{2}\right),
\end{equation}
which  implies
\[8k+4=k''\left(4k+1-\left(\frac{p}{5}\right)\right).\]
We have again a contradiction, since  $\left(\frac{p}{5}\right)=\pm 1$.
\item[] \textbf{Case II.} 
Assume now that  $p=4k+3$. We follow the argument given in
Case I. According \eqref{Ide2} and Theorem \eqref{Rec}, we can write
\begin{equation}
p\vert \sum_{i=1}^p F_i=F_{4(k+1)+1}-1=F_{2(k+1)}L_{2(k+1)+1}.
\end{equation}
The prime number $p=4k+3$ must divide $F_{2(k+1)}$ or $L_{2(k+1)+1}$. Suppose first that $p|F_{2(k+1)}$.  By Lemma \eqref{Lem4}, there exists a positive integer $k'$ such that
\[k'\left(p-\left(\frac{p}{5}\right)\right)=2(k+1).\]
Hence,
\[k'\left(4k+3-\left(\frac{p}{5}\right)\right)=2k+2.\]
which cannot hold, since $\left(\frac{p}{5}\right)=\pm 1$.

Now, let $p|L_{2(k+1)+1}$. According Lemma \eqref{Lem5}, there exists a positive integer $k''$ such that 
\begin{equation}
4(k+1)+2=k''\left(\frac{p-\left(\frac{p}{5}\right)}{2}\right),
\end{equation}
which implies
\[k''\left(4k+3-\left(\frac{p}{5}\right)\right)=8(k+1)+4.\]
This is impossible, since  $\left(\frac{p}{5}\right)=\pm 1$ .
\end{itemize}
Proof of Theorem\eqref{Thm10} follows immediately from the above two contradictions.
%
\end{proof}
Consider now the first values of the sequence in OEIS \href{https://oeis.org/A111035 }{A111035 }:
\begin{eqnarray*}
&&\textcolor{red}{1}, 2, 24, 48, 72,\textcolor{red}{ 77}, 96, 120, 144, 192, 216, 240, 288, \textcolor{red}{319}, \\&&\textcolor{red}{323},
 336, 360, 384, 432, 480, 576, 600, 648, 672, 720, 768,\\&& 864, 960, 
 1008, 1080, 1104, 1152, 1200, 1224, 1296, 1320,\\&& 1344, 1368, 1440, 
\textcolor{red}{ 1517}, 1536, 1680, 1728, 1800, 1920\cdots . 
\end{eqnarray*}
We see that each odd integer within this sequence is square-free, for example, $77=11\times 7\quad, 319=11\times29,\quad323=17\times 19$ and $1517=37\times41.$ By assuming the validity of the well-known conjecture concerning Wall-Sun-Sun (or Fibonacci-Wieferich) primes, we show in Theorem \eqref{Thm9} below that if $n$ is an odd positive integer such that $n|\sum_{i=1}^nF_i$ then $n$ is square-free.\\
We first state the Wall-Sun-Sun Prime Conjecture:
\begin{Conjecture}[Wall-Sun-Sun Prime Conjecture \cite{Sun}]\label{Con}
There are no prime numbers $p$ such that
\[p^2|F_{p-\left(\frac{p}{5}\right)}.\]
\end{Conjecture}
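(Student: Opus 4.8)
The plan is first to recast the divisibility condition $p^2 \mid F_{p-\left(\frac{p}{5}\right)}$ in terms of the rank of apparition, then $p$-adically in terms of a Fermat-quotient condition; this will expose the statement as a genuine Wieferich-type problem and let me locate precisely where any proof attempt must break down. Recall from Lemma \ref{Lem4}(ii) that $p \mid F_n$ forces $p^2 \mid F_{np}$, so, writing $\rho = \rho(p)$ for the rank of apparition, one always has $\rho(p^2) \in \{\rho,\, p\rho\}$. Since $\rho(p) \mid p - \left(\frac{p}{5}\right)$ for every prime $p$ (by Lemma \ref{Lem4}(i), as $p \mid F_{p-\left(\frac{p}{5}\right)}$) while $p \nmid p - \left(\frac{p}{5}\right)$, Lemma \ref{Lem4}(i) applied to $m=p^2$ shows that $p$ satisfies the displayed congruence if and only if $\rho(p^2) = \rho(p)$, that is, exactly when the rank of apparition fails to grow on passing from $p$ to $p^2$. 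A prime with this property is a \emph{Wall-Sun-Sun} (or Fibonacci-Wieferich) prime, and the Conjecture asserts that none exists.

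Next I would make the Wieferich analogy exact through Binet's formula. Suppose $\left(\frac{p}{5}\right)=1$, so that $\sqrt5$, and hence $\alpha$ and $\beta$, lie in $\mathbb{Z}_p^{\times}$. From $F_{p-1} = (\alpha^{p-1}-\beta^{p-1})/\sqrt5$ the condition $p^2 \mid F_{p-1}$ becomes $\alpha^{p-1} \equiv \beta^{p-1} \pmod{p^2}$. Introducing the Fermat quotient $q_p(x) = (x^{p-1}-1)/p \in \mathbb{Z}_p$ and using $q_p(xy) \equiv q_p(x)+q_p(y)$ together with $\alpha\beta = -1$ (so that $q_p(\alpha)+q_p(\beta)\equiv q_p(-1)\equiv 0$ for odd $p$), the condition collapses to $q_p(\alpha) \equiv 0 \pmod p$, i.e. $\alpha^{p-1} \equiv 1 \pmod{p^2}$. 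The remaining case $\left(\frac{p}{5}\right)=-1$ is entirely analogous, carried out over the unramified quadratic extension $\mathbb{Q}_{p^2}$ with $p+1$ in place of $p-1$ and Frobenius swapping $\alpha$ and $\beta$; it reduces to the vanishing of the same Fermat-type quotient. Thus being Wall-Sun-Sun is exactly the Wieferich condition $2^{p-1}\equiv 1\pmod{p^2}$, but with the base $2$ replaced by the golden ratio $\alpha$.

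The heuristic that now emerges is the crux of the matter. Modelling $q_p(\alpha) \bmod p$ as uniformly distributed over $\{0,1,\dots,p-1\}$, the likelihood that a given prime $p$ is Wall-Sun-Sun is $1/p$, and $\sum_p 1/p$ diverges; the same reasoning predicts infinitely many such primes, so that the Conjecture as literally stated — that there are none — is widely expected to be \emph{false}, even though not a single example has ever been exhibited. Consequently I do not expect to produce an unconditional proof. What a proposal can honestly deliver is threefold: (i) the two reductions above, identifying the condition with $q_p(\alpha)\equiv 0 \pmod p$; (ii) a verification that this reformulated congruence has no solutions below the current computational search bound; and (iii) an explicit statement of the obstruction.

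The hard part — indeed the reason this remains a conjecture — is that proving non-existence of Wieferich-type primes lies beyond all present technology. We possess no method to control $q_p(\alpha) \bmod p$ \emph{simultaneously for all} primes $p$: the requirement $\alpha^{p-1}\equiv 1 \pmod{p^2}$ is a single linear condition modulo $p$ that current sieve, transcendence, and $p$-adic techniques cannot rule out across an infinite family. Any genuine proof would demand an entirely new input forcing $q_p(\alpha)\not\equiv 0$ for every $p$, which would at the same time resolve the long-standing Wieferich problem. Absent such an input, the assertion can only be recorded conditionally, which is exactly the role it plays in the sequel, where it is assumed in order to deduce the square-free property of odd integers $n$ with $n\mid\sum_{i=1}^{n}F_i$.
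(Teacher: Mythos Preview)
The statement is a \emph{conjecture} in the paper, not a theorem; the paper offers no proof whatsoever and instead merely states it and then assumes it as a hypothesis in Theorem~\ref{Thm9}. Your proposal correctly recognizes this and, rather than attempting a proof, gives a clear account of why none is available: you reduce the condition to the equality $\rho(p^2)=\rho(p)$ and then to the Wieferich-type congruence $\alpha^{p-1}\equiv 1\pmod{p^2}$ (or its analogue over the unramified quadratic extension when $\left(\frac{p}{5}\right)=-1$), and you point out that the standard $1/p$ heuristic in fact predicts infinitely many counterexamples rather than none. This is the honest treatment of an open problem and is entirely consistent with the role the statement plays in the paper.

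One small caveat: your claim that $p\nmid p-\left(\frac{p}{5}\right)$ tacitly assumes $p\neq 5$, since $\left(\frac{5}{5}\right)=0$; the case $p=5$ must be handled separately (and is immediate, as $F_5=5$ and $25\nmid 5$). Otherwise your reformulations are standard and correct, and there is nothing in the paper's own treatment to compare them against.
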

In $1992$, the authors of \cite{Sun} proved that if \textit{Wall-Sun-Sun prime number conjecture} is true, then the \textit{Fermat equation} $x^p+y^p=z^p$ has no integral solutions with $p\not\vert xyz$. Empirically it has been observed that there are no Wall-Sun-Sun primes less than $100,000,000,000,000.$

\begin{theorem}\label{Thm9}
Let $n$ be an odd positive integer. If $n|\sum_{i=1}^n F_i$, then $n$ is a square-free.
\end{theorem}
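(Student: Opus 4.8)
The plan is to argue by contradiction, reusing the machinery developed for Theorem~\eqref{10}. Suppose $n$ is odd, $n\mid\sum_{i=1}^n F_i$, and yet $p^{2}\mid n$ for some prime $p$; since $n$ is odd, $p$ is odd. Using identity~\eqref{Ide2} and Theorem~\eqref{Rec}, and noting that $n+2$ is odd, one of the identities of Theorem~\eqref{Rec} applies and gives
\[
\sum_{i=1}^{n}F_i=F_{n+2}-1=F_a\,L_b,\qquad \{a,b\}=\Bigl\{\tfrac{n+1}{2},\ \tfrac{n+3}{2}\Bigr\},
\]
where the Fibonacci index $a$ is even, the Lucas index $b$ is odd, $a+b=n+2$, and $b=a-1$ when $n\equiv1\pmod4$ while $b=a+1$ when $n\equiv3\pmod4$. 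Since $a$ and $b$ are consecutive, the identity $L_{a\pm1}=2F_a\pm F_{a\pm1}$ gives $\gcd(F_a,L_b)=\gcd(F_a,F_{a\pm1})=1$, so from $p^{2}\mid F_aL_b$ we conclude $p^{2}\mid F_a$ \emph{or} $p^{2}\mid L_b$.

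Next I would insert each alternative into Lemmas~\eqref{Lem4} and~\eqref{Lem5} together with the Wall--Sun--Sun Conjecture~\eqref{Con}. That conjecture is equivalent to $\rho(p^{2})=p\,\rho(p)$ for every prime $p$: by part (ii) of Lemma~\eqref{Lem4} one always has $\rho(p)\mid\rho(p^{2})\mid p\,\rho(p)$, and $\rho(p^{2})=\rho(p)$ would force $p^{2}\mid F_{\rho(p)}\mid F_{p-\left(\frac{p}{5}\right)}$, contradicting~\eqref{Con}. Now if $p^{2}\mid F_a$, then $\rho(p^{2})=p\rho(p)\mid a$ by part (i) of Lemma~\eqref{Lem4}, so $p\mid a$; if $p^{2}\mid L_b$, then $p\mid L_b$, so $\rho(p)$ is even by Lemma~\eqref{Lem5}, and writing $\rho(p)=2\sigma(p)$, Lemma~\eqref{Lem5} applied to $p^{2}$ gives $b\equiv\tfrac12\rho(p^{2})=p\,\sigma(p)\pmod{2p\,\sigma(p)}$, so $p\mid b$. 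In every case $p$ divides one of $a,b$, while $p\mid n=a+b-2$; substituting $b=a\pm1$ (the sign depending on $n\bmod4$), two of the four resulting sub-cases give $p\mid1$ (impossible) and the other two give $p\mid3$, so $p=3$. The surviving sub-case with $p^{2}\mid L_b$ forces $n\equiv3\pmod4$ and $b=\tfrac{n+3}{2}$, but then $9\mid L_b$ gives, by Lemma~\eqref{Lem5} with $\rho(9)=12$ and $\sigma(9)=6$, that $b\equiv6\pmod{12}$ is even --- impossible since $b$ is odd. Hence the only configuration left is $p=3$, $n\equiv1\pmod4$, $a=\tfrac{n+3}{2}$, $9\mid F_a$, and since $\rho(9)=12$ this forces $12\mid a$, i.e.\ $24\mid n+3$.

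The last step --- eliminating $9\mid n$ from this residual configuration --- is the one I expect to be the main obstacle. Here I would invoke the explicit law of apparition of the Fibonacci numbers at $3$ (an unconditional consequence of $3$ not being a Wall--Sun--Sun prime): $v_3(F_m)=1+v_3(m)$ whenever $4\mid m$. Since $12\mid a$ we have $b=a-1\equiv3\pmod4$, hence $3\nmid L_b$ (as $3\mid L_m$ only for $m\equiv2\pmod4$), so $v_3(n)=v_3(F_aL_b)=v_3(F_a)=1+v_3(a)=1+v_3(n+3)$; a short $3$-adic computation then pins $v_3(n)$ to exactly $2$ and confines $n$ to the classes $n\equiv45,\,117\pmod{216}$. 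Finishing the contradiction from here is the delicate point: one wants to show no such $n$ satisfies $n\mid F_{n+2}-1$, presumably by writing $n=d_1d_2$ with $d_1=\gcd(n,F_a)$, $d_2=\gcd(n,L_b)$ (so $d_1d_2=n$ by coprimality of $F_a,L_b$, and $9\mid d_1$), and playing $\rho(d_1)\mid a=\tfrac{n+3}{2}$ against $d_1\mid n$ and the divisibility $\rho(q^{e})\mid q^{e-1}\bigl(q-\left(\frac{q}{5}\right)\bigr)$ over the prime powers $q^{e}\parallel d_1$; alternatively one can try to sharpen the case analysis of the second paragraph so that $p=3$ is excluded on the same footing as the other primes. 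Everything up to ``$p=3$'' is a faithful transcription of the argument already used for Theorem~\eqref{10}, so I expect that portion of the write-up to be routine.
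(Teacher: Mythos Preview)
Your outline follows exactly the paper's route: factor $\sum_{i=1}^n F_i=F_aL_b$ via Theorem~\ref{Rec}, split the prime power between $F_a$ and $L_b$, and then invoke Conjecture~\ref{Con}. In fact you are \emph{more} careful than the paper at two points: you prove $\gcd(F_a,L_b)=1$ (the paper simply asserts ``$p_i^{\alpha_i}\mid F_a$ or $p_i^{\alpha_i}\mid L_b$'' without justifying why the power cannot split), and you use the Wall--Sun--Sun hypothesis in the correct form $\rho(p^{2})=p\,\rho(p)$ rather than the blanket claim the paper makes.

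That blanket claim is precisely where the paper's argument and yours part company. The paper writes that $p_i^{2}\nmid F_{p_i-(p_i/5)}$ ``implies $p_i^{2}\nmid F_a$,'' and the analogous statement for $L_b$. Taken literally this is false: $3$ is not a Wall--Sun--Sun prime, yet $9\mid F_{12}$. The paper supplies no link between the index $p-(p/5)$ and the index $a$ actually occurring, so the published proof has a genuine gap at exactly the point you got stuck. Your reduction to $p\mid a$ or $p\mid b$, combined with $p\mid n=2a-3$ or $2a-1$, correctly isolates $p=3$ as the only obstruction, and your elimination of the $L_b$ branch via $\sigma(9)=6$ even is sound. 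What remains --- ruling out $9\mid n$, $n\equiv 1\pmod4$, $12\mid a$, $9\mid F_a$ --- is not handled by the paper either; the paper simply does not see it.

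Two small corrections to your write-up. First, $n\mid F_aL_b$ gives only $v_3(n)\le v_3(F_aL_b)$, not equality; your conclusion $v_3(n)=2$ still follows, since $v_3(n)\ge 2$ forces $v_3(n+3)=1$ and hence $v_3(n)\le 1+v_3(n+3)=2$. Second, your proposed endgame (splitting $n=d_1d_2$ and playing $\rho(d_1)\mid a$ against $d_1\mid n$) will not close the gap without further input: the same gcd computation that produced $p=3$ shows only that every prime $q\mid d_1$ satisfies $\rho(q)\mid\gcd(a,q-(q/5))$, and there is no size bound preventing $d_1$ from having many small prime factors. In short, your proposal is the paper's proof done honestly, and the honest version is incomplete.
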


%

\begin{proof}
Let $n=p_1^{\alpha_1}p_2^{\alpha_2}\cdots p_k^{\alpha_k}$, where each $p_i$ is an odd prime and suppose $n|\sum_{i=1}^nF_i$. We will show that $\alpha_1=\alpha_2=\cdots=\alpha_k=1$.\\
 By Theorem \eqref{Rec} and identity \eqref{Ide2}, for positive integers $a,b$, we have
 \begin{equation}
 n\vert \sum_{i=1}^n F_i=F_aL_b
 \end{equation}
 so for $1\leq i\leq k$ we have $p_i^{\alpha_i}\vert F_aL_b$.\\ 
 It follows from the above argument that $p_i^{\alpha_i}|F_a$ or $p_i^{\alpha_i}|L_b$ so suppose $p_i^{\alpha_i}\vert  F_a$. Employing Lemma \eqref{Lem4} and  the Wall-Sun-Sun-prime conjecture \eqref{Con}, we have
 \[p_i^2\not\vert  F_{p_i-\left(\frac{p_i}{5}\right)},\]
 which implies $p_i^2\not\vert F_a$.
 
 Suppose now that $p_i^{\alpha}|L_b$. Again, according Lemma \eqref{Lem5} and the Wall-Sun-Sun Prime Conjecture we have
 \[p_i^2\not\vert L_b.\]
 It follows that $p_i^2\not\vert F_aL_b$ for $1\leq i\leq k$, which implies that $\alpha_1=\alpha_2=\cdots=\alpha_k=1$, so we see immediately that $n$ is square-free.
\end{proof}
We conclude this paper with interesting conjectures concerning averages of Fibonacci numbers.
\begin{Conjecture}\label{Con4}
For each positive integer $t$, there is positive integer like $n$ such that if  
\[n\vert \sum_{i=1}^n F_i, \quad \text{then} \quad n+t\vert \sum_{i=1}^{n+t} F_i.\]
\end{Conjecture}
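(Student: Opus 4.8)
The plan is to attack Conjecture~\ref{Con4} by producing, for each fixed shift $t$, an explicit infinite family of values $n$ along which both divisibility conditions hold simultaneously. The natural starting point is Theorem~\ref{Thmn}: values of the shape $n = 2^{\alpha+3}\cdot 3^{\beta+1}\cdot 5^{\gamma}$ satisfy $n \mid \sum_{i=1}^n F_i$. So I would first try to arrange that $n$ and $n+t$ both lie in (a suitable enlargement of) this family, or more realistically in the larger family $\{m : 4\mid m \text{ and } m \mid F_{m/4 \cdot \text{something}}\}$ extracted from the proof of Theorem~\ref{Thm2}. The key algebraic fact to exploit is identity~\eqref{Ide2} together with Theorem~\ref{Rec}: writing $\sum_{i=1}^m F_i = F_{m+2}-1 = F_a L_b$ with $a,b$ roughly $m/2$, the condition $m \mid \sum_{i=1}^m F_i$ reduces to a divisibility of a Fibonacci--Lucas product, which by Lemma~\ref{Lem4}(i) and Lemma~\ref{Lem5} is governed entirely by ranks of apparition $\rho(\cdot)$ and $\sigma(\cdot)$.

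The concrete steps I would carry out, in order, are as follows. First, reduce the statement ``$n \mid \sum_{i=1}^n F_i$'' to a system of congruences: for each prime power $q \parallel n$, either $\rho(q) \mid a$ or ($\rho(q)$ is even and) $b \equiv \rho(q)/2 \pmod{\rho(q)}$, where $a,b$ depend on $n \bmod 4$ via Theorem~\ref{Rec}. Second, treat $n$ and $n+t$ together: since $t$ is fixed, one can choose $n \equiv 0 \pmod{4}$ and $n+t$ in a prescribed residue class mod $4$ (if $4\mid t$ both are $\equiv 0$, which is the cleanest case), and then the two divisibility requirements become a finite list of constraints on the residues of $n$ modulo various $\rho(q)$'s. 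Third, invoke the Chinese Remainder Theorem / the periodicity of $F_m \bmod M$ (Pisano periods) to show the constraint system is consistent and has infinitely many solutions $n$; the role of Dirichlet-type density or just periodicity is to guarantee the prime powers dividing $n$ and $n+t$ can be controlled, e.g. by forcing $n = 24\cdot 5^{\gamma}\cdot(\text{stuff})$ and $n+t$ of a parallel shape. Fourth, handle the case $4 \nmid t$ by splitting on the parity/residue of $t$ and using the appropriate line of Theorem~\ref{Rec} for each of $n, n+t$; here the identity $F_{2n}=L_nF_n$ and Lemma~\ref{Luca2} are the tools for rewriting the relevant sums.

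The main obstacle I anticipate is the interaction between the two conditions when $t$ is not divisible by $4$: then $n$ and $n+t$ fall under different cases of Theorem~\ref{Rec}, so the reductions $\sum_{i=1}^n F_i = F_{a}L_{b}$ and $\sum_{i=1}^{n+t} F_i = F_{a'}L_{b'}$ use structurally different factorizations, and it is not obvious that one can simultaneously satisfy ``$\rho$-divisibility'' conditions of both types with a single choice of $n$. A second, subtler difficulty is that controlling which prime powers exactly divide $n$ (as opposed to merely divide it) is needed to apply the square-free-type analysis, and ensuring $n+t$ also factors conveniently may require choosing $n$ so that $n+t$ is, say, $4$ times a product of the fixed primes $2,3,5$ raised to controllable exponents --- a simultaneous representation problem that could fail for some $t$. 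A fallback, which I would state if the full conjecture resists, is the weaker assertion that for each $t$ there are infinitely many $n$ with $n \mid \sum_{i=1}^n F_i$, $(n+t) \mid \sum_{i=1}^{n+t} F_i$, and $\gcd(n, t) $ small, proved by pigeonholing on Pisano periods: among the infinitely many ``good'' $n$ from Theorem~\ref{Thm2}, two that are congruent modulo a large enough Pisano-type modulus $P$ differ by a multiple of $P$, and iterating/combining such pairs produces the desired shifts.
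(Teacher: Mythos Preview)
There is no proof in the paper to compare against: the statement is labelled and presented as a \emph{conjecture}, introduced with ``We conclude this paper with interesting conjectures\ldots'', and the paper offers no argument for it. So the question of whether your approach matches the paper's does not arise.

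More importantly, you have overlooked that the conjecture \emph{as literally written} is trivial. The asserted claim is: for each $t$ there exists an $n$ such that the implication
\[
n \,\Big\vert\, \sum_{i=1}^n F_i \quad \Longrightarrow \quad (n+t)\,\Big\vert\, \sum_{i=1}^{n+t} F_i
\]
holds. An implication with a false hypothesis is true. Take $n=3$: then $\sum_{i=1}^3 F_i = 4$ and $3 \nmid 4$, so the hypothesis fails and the implication holds vacuously, regardless of $t$. That is a complete one-line proof of the stated conjecture.

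What you are actually trying to prove is the stronger (and presumably intended) statement that for each $t$ there exists $n$ with $n \mid \sum_{i=1}^n F_i$ \emph{and} $(n+t)\mid \sum_{i=1}^{n+t} F_i$. Your outline for that is a reasonable research plan---reducing via Theorem~\ref{Rec} to rank-of-apparition congruences and hoping to solve them simultaneously by CRT/Pisano periodicity---but it is not a proof. You yourself flag the key obstruction: controlling the prime factorisations of $n$ and $n+t$ simultaneously so that both satisfy the required $\rho$- and $\sigma$-conditions is a genuine simultaneous-representation problem, and nothing in your sketch resolves it. In particular, the ``fallback'' pigeonhole argument at the end produces pairs of good values whose \emph{difference} is a multiple of some Pisano period $P$, which gives no control over a \emph{prescribed} shift $t$; you would need a good value in every residue class modulo some fixed modulus depending on $t$, and that is exactly what remains unproven.
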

\begin{Conjecture}\label{Con3}
There are infinitely many pairs of positive integers $(n,n+1)$ such that 
\[n\vert \sum_{i=1}^n F_i\quad \text{and}\quad n+1\vert \sum_{i=1}^{n+1} F_i.\]
\end{Conjecture}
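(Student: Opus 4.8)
To approach the conjecture above, the plan is to translate both divisibility conditions into congruences governed by ranks of apparition and then to try to satisfy the two (coprime) conditions simultaneously for infinitely many $n$. By identity \eqref{Ide2} the requirements read $n\mid F_{n+2}-1$ and $n+1\mid F_{n+3}-1$. Applying Theorem \eqref{Rec} to each index, and writing the partial sum in the factored form $F_{a}L_{b}$ dictated by the residue of the index modulo $4$ (exactly as in the proof of Theorem \eqref{Thm9}), I would record
\[
n\mid F_{a}L_{b}\qquad\text{and}\qquad n+1\mid F_{a'}L_{b'},
\]
where $a,b,a',b'$ are explicit linear functions of $n$ of size roughly $n/2$. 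Since $\gcd(n,n+1)=1$, I would then analyse the two conditions prime by prime.

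For the even member of the pair I would lean on the explicit families already constructed: by Theorem \eqref{Thmn} every integer $N=2^{\alpha+3}3^{\beta+1}5^{\gamma}$ satisfies $N\mid\sum_{i=1}^{N}F_i$, so it is natural to seek pairs in which the even element has this prescribed smooth shape and to reduce the problem to producing infinitely many such $N$ for which the odd neighbour $N\pm1$ also divides its partial sum. For the odd neighbour $m=N\pm1$, Theorem \eqref{Thm9} (granting the Wall--Sun--Sun conjecture \eqref{Con}) forces $m$ to be square-free, and Lemmas \eqref{Lem4} and \eqref{Lem5} convert each condition $p\mid F_{a}$ or $p\mid L_{b}$ into a congruence for the index modulo the rank of apparition $\rho(p)$; writing $m=\prod_j p_j$, the divisibility $m\mid F_{m+2}-1$ becomes a finite system of such congruences, one per prime factor $p_j$.

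The hard part, and the reason the statement is only conjectural, is the \emph{self-referential} nature of this system: the indices $a,b,a',b'$ grow with $n$, while the relevant moduli $\rho(p_j)$ depend on the a priori unknown prime factorisations of $n$ and $n+1$. One therefore cannot fix a single modulus and run the Chinese Remainder Theorem, because enlarging $n$ to reach the next candidate changes both the index side and the prime side at once. Note also that the clean apparition-tower device used for Theorem \eqref{Thm2}, which exploited $F_a\mid F_b\Leftrightarrow a\mid b$, is unavailable here: the ``$-1$'' in $F_{n+2}-1$ replaces pure divisibility by a question about \emph{where the value $1$ occurs} in the Fibonacci cycle modulo $n$, which is not multiplicative. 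A genuine proof would seem to require an equidistribution input showing that the residue $F_{n+2}\bmod n$ is spread out enough that the two coprime conditions hold together for a positive proportion of $n$; such a statement for the diagonal value $F_{n+2}\bmod n$ appears to lie beyond current techniques, so I would expect this route to yield strong numerical and heuristic support rather than a complete proof. The pair $(1,2)$, together with the clustering of admissible odd values as products of two nearby primes visible in \href{https://oeis.org/A111035}{A111035}, is consistent with this heuristic.
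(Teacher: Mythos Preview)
The statement you are addressing is labeled a \emph{Conjecture} in the paper, and the paper does not attempt to prove it. After stating Conjecture~\ref{Con3}, the authors give only numerical evidence: a Mathematica search up to $10^6$ producing the five pairs
\[
(1,2),\ (6479,6480),\ (11663,11664),\ (51983,51984),\ (196559,196560),
\]
and nothing more. There is therefore no ``paper's own proof'' to compare against.

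Your write-up is not a proof either, and you say so yourself: you correctly reduce the two conditions to $n\mid F_{n+2}-1$ and $n+1\mid F_{n+3}-1$, factor via Theorem~\ref{Rec}, and then observe that the self-referential dependence of the moduli $\rho(p_j)$ on the unknown prime factors of $n$ and $n+1$ blocks any Chinese Remainder argument. That diagnosis is accurate, and your remark that the ``$-1$'' destroys the multiplicativity exploited in Theorem~\ref{Thm2} is to the point. But none of this advances toward a proof; it is a heuristic explanation of why the problem is hard, which is consistent with the paper leaving it open. In short: there is no gap to name because you have not claimed a proof, and there is no divergence from the paper because the paper offers only data.
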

Using Mathematica we have empirically found all such pairs of integers up to 1000000. These pairs are:
\[(1, 2), (6479, 6480), (11663, 11664), (51983,51984), (196559, 196560).\]

\end{document}